\newcommand{\vertiii}[1]{{\left\vert\kern-0.25ex\left\vert\kern-0.25ex\left\vert #1 
    \right\vert\kern-0.25ex\right\vert\kern-0.25ex\right\vert}}
\theoremstyle{plain}
\newtheorem{teorema}{Theorem}[section]
\newtheorem{proposizione}[teorema]{Proposition}
\newtheorem{lemma}[teorema]{Lemma}
\newtheorem{corollario}[teorema]{Corollary}
\newtheorem*{theorem*}{Theorem}
\theoremstyle{definition}
\newtheorem{definizione}[teorema]{Definition}
\theoremstyle{Question}
\theoremstyle{plain}
\theoremstyle{plain}
\theoremstyle{plain}
\theoremstyle{definition}
\theoremstyle{remark}
\theoremstyle{remark}
\newtheorem{osservazione}{Remark}[section]
\theoremstyle{remark}
\newcommand{\N}{\mathbb{N}}
\newcommand{\R}{\mathbb{R}}
\newcommand{\HH}{\mathbb{H}}
\newcommand{\Haus}{\mathscr{H}}
\newcommand{\res}
\newcounter{const}
\newcounter{eps}
\DeclareMathOperator*{\diam}{diam}
\title{\large \normalfont\spacedallcaps{On sets with unit Hausdorff density in homogeneous groups}} 
\author{\spacedlowsmallcaps{Antoine Julia\textsuperscript{*}, Andrea Merlo\textsuperscript{**}}} 
\date{\today} 
\begin{document}


\renewcommand{\sectionmark}[1]{\markright{\spacedlowsmallcaps{#1}}} 
\lehead{\mbox{\llap{\small\thepage\kern1em\color{halfgray} \vline}\color{halfgray}\hspace{0.5em}\rightmark\hfil}} 

\pagestyle{scrheadings} 


\maketitle 

\setcounter{tocdepth}{2} 
\tableofcontents



\paragraph*{Abstract} 
It is a longstanding conjecture that given a subset $E$ of a metric space, if $E$ has finite Hausdorff measure in dimension $\alpha$
 and $\mathscr{H}^\alpha\llcorner E$ has unit density almost everywhere, then $E$ is an $\alpha$-rectifiable set. We prove this conjecture under the assumption that the ambient metric space is a homogeneous group with a \textit{smooth-box} norm.

\paragraph*{Keywords}

\paragraph*{MSC (2010)} 28A75, 28A78, 53C17.

{\let\thefootnote\relax\footnotetext{* \textit{D\'epartement de Math\'ematiques d'Orsay, Universit\'e Paris-Saclay, Orsay, France}\\
**\textit{D\'epartement de Math\'ematiques, Universit\'e de Fribourg, Fribourg, Suisse}}}

\section{Introduction}

A subset $E$ of a metric space $(X,d)$ is \textbf{$m$-rectifiable} if it can be covered up to an $\mathscr{H}^m$-null set, here as usual $\mathscr{H}^\alpha$ denotes the Hausdorff measure of dimension $\alpha\ge 0$, see \cite[\S 2.10.2]{Federer1996GeometricTheory}, by a countable union of Lipschitz images of subsets of $\R^m$. It was proved by B.~Kirchheim in \cite{kircharea} that such a set $E$ has \textbf{Hausdorff density} equal to one at almost every point, i.e.:
\[
	\Theta^m(E,x) := \lim_{r\to 0} \dfrac{\mathscr{H}^m(E\cap B(x,r))}{(2r)^m} = 1,
\]
for $\mathscr{H}^m$-almost all $x\in E$, where $B(x,r)$ denotes the closed unit ball in $(X,d)$ with center $x$ and radius $r$. It is natural to ask the converse question: does the unit Hausdorff density property imply rectifiability of $E$? A positive answer assuming that $(X,d)$ is the euclidean space, was given by P.~Mattila in the fundamental work \cite{Mattila1975hausdorff}, following partial results by A.~S.~Besicovitch \cite{Besicovitch1927fundamental} and J.~M.~Marstrand \cite{Marstrand1961D2inR3,Marstrand1964Regularsubsets}. More precisely, the following result holds~:
\begin{teorema}[P.~Mattila, 1975]\label{thm-euclidean}
	Fix $\alpha\ge 0$, if $E\subset \R^n$, endowed with the euclidean distance, is such that $\mathscr{H}^\alpha(E) <+\infty$ and $\Theta^\alpha(E,x) = 1$ for $\mathscr{H}^\alpha$-almost all $x\in E$, then $\alpha$ is an integer and $E$ is $\alpha$-rectifiable.
\end{teorema}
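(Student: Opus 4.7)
The plan is to analyse the structure of $E$ at $\mathscr{H}^\alpha$-almost every point through a blow-up argument, show that the resulting tangent objects are flat $m$-planes, and then invoke Federer's characterisation of rectifiable sets. The preliminary reduction to integer dimension is essentially free: by Marstrand's density theorem, the existence of a positive and finite $\alpha$-density on a set of positive $\mathscr{H}^\alpha$-measure already forces $\alpha\in\mathbb{N}$, so from now on write $m:=\alpha$.

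For a density point $x\in E$, one introduces the rescaled sets $E_{x,r}:=r^{-1}(E-x)$ and the Radon measures $\mu_{x,r}:=r^{-m}\mathscr{H}^m\llcorner E_{x,r}$. The unit-density assumption pins down $\mu_{x,r}(B(0,1))$ as $r\to 0$, and Blaschke compactness for closed subsets of each bounded region, combined with weak-$*$ compactness of Radon measures of locally bounded mass, yields a subsequential tangent set $T_x$ and tangent measure $\nu_x$ supported on it. The key technical input is a propagation lemma: the unit density property, a priori only pointwise, can be upgraded via a Vitali-type covering argument to a statement uniform in the scale $r$ on a subset of $E$ of almost full measure. Passing to the limit, this forces $\nu_x(B(y,s))=\omega_m s^m$ for every $y\in\supp\nu_x$ and every $s>0$, i.e.\ $\nu_x$ is \emph{uniformly $m$-distributed}.

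The heart of the proof is then the rigidity statement that such a uniformly $m$-distributed measure on $\R^n$ must be $\omega_m$ times $\mathscr{H}^m$ restricted to an affine $m$-plane through the origin. In the Euclidean setting one can argue through moment computations: by writing out the identity $\nu_x(B(y,s))=\omega_m s^m$ as a polynomial identity in $s$ one sees that the first moments of $\nu_x$ centred at any point of $\supp\nu_x$ vanish, and the second moments, after diagonalisation, constrain $\supp\nu_x$ to an $m$-dimensional affine subspace; a direct mass comparison then promotes inclusion to equality. Once every tangent of $E$ at a typical point is an affine $m$-plane, a tilt-excess argument produces a unique approximate tangent plane at $\mathscr{H}^m$-a.e.\ $x$, and Federer's criterion closes the argument to yield $m$-rectifiability.

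The main obstacle is the rigidity step. Uniformly distributed measures on $\R^n$ are not in general flat, as Preiss's later examples in higher dimension exhibit; it is precisely the sharp constant $\omega_m$ coming from the unit-density hypothesis, rather than a merely positive and finite density, that allows one to exclude them here. Making this quantitative — tracking how the exact density value at $x$ propagates through the blow-up into the full symmetry of $\nu_x$ — is the delicate geometric part; the remaining ingredients (compactness, Vitali covering, Federer's criterion) are essentially standard.
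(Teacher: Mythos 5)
Your reduction to integer $\alpha$ via Marstrand and your propagation of the pointwise density to uniformity of the blow-up limits are fine, but the heart of your argument fails. Note first that the paper does not prove Theorem \ref{thm-euclidean} at all — it quotes it from Mattila's 1975 paper, whose proof is a geometric argument predating tangent measures (it plays the a.e.\ upper bound for coverings by \emph{arbitrary} closed sets, cf.\ Proposition \ref{prop:federer-density}, against the unit lower density of balls; this is the strategy the present paper adapts to homogeneous groups). Your proposal is instead the Preiss-style route, and its rigidity step is not just unproven but false as stated: a uniformly $m$-distributed measure on $\R^n$ with exactly the flat constant need not be flat, and first/second moment identities cannot confine its support to an affine $m$-plane. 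The Kowalski--Preiss cone $C=\{x\in\R^4:\ x_4^2=x_1^2+x_2^2+x_3^2\}$ satisfies
\[
\mathcal{H}^3\bigl(C\cap B(y,r)\bigr)=\omega_3 r^3 \qquad\text{for every } y\in C,\ r>0,
\]
where $\mathcal{H}^3$ is normalized to agree with Lebesgue measure on $3$-planes; in the diameter-normalized measure of this paper this reads $\mathscr{H}^3(C\cap B(y,r))=(2r)^3$, i.e.\ $C$ has unit Hausdorff density at \emph{every} point. So the sharp constant $\omega_m$, which you invoke precisely to exclude non-flat uniform measures, excludes nothing: this non-flat measure carries the same constant as a plane, and it even arises as a tangent measure of a unit-density set (of $C$ itself, at its vertex). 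Hence the lemma ``uniform with constant $\omega_m$ $\Rightarrow$ flat'' is genuinely false for $m\ge 3$, and your moment computation cannot deliver it. (The cone is of course rectifiable, so the theorem itself is safe; only your intermediate claim dies.)

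To complete a proof along your lines one must do what Preiss actually does: show that non-flat uniform measures are ``flat at infinity'' and run the connectivity argument in the space of tangent measures to conclude that non-flat uniform tangents can occur only on a null set — this is the deep part of \cite{Preiss1987GeometryDensities}, whose Corollary 5.4(3) indeed recovers and extends Mattila's theorem. Alternatively, follow Mattila's original and more elementary route, which avoids tangent measures altogether: at a.e.\ $x$ one has $\mathscr{H}^m(E\cap S)\le(1+o(1))(\diam S)^m$ for all small sets $S\ni x$, while balls centered on $E$ carry mass at least $(1-o(1))(2r)^m$; testing with sets $S$ that contain a ball but have barely larger diameter (the isodiametric mechanism that this paper turns into the set $G(0,1)$ of Proposition \ref{prop:Gset}) forces $E$ to concentrate near graphs and yields rectifiability. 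As written, your proposal has a genuine gap that cannot be bridged by the tools you list.
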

This result is surprising, in that one is able to infer strong geometric property (the existence of flat, $\alpha$-dimensional tangents to $E$ at almost all points) from an \textit{a priori} much less geometric condition. 
 Note that the unit density condition can be relaxed, to bounds on the lower and upper densities of $E$, as was shown by D.~Preiss in \cite[Corollary 5.4(3)]{Preiss1987GeometryDensities}.

\medskip

In general metric spaces, and for sets of dimension $1$ the question was answered by D.~Preiss and J.~Ti\v ser \cite{PreissTiser-Besicovitch}. Their proof uses a variant of the argument by Besicovitch, which is specific of dimension $1$ and the fact that a continuum with finite $\mathscr{H}^1$ measure is rectifiable. The extension to higher dimensions is still widely open, even in $\R^n$ with an non-euclidean norm. In this paper, we treat the case in which the ambient metric space is a homogeneous group (for instance, any Carnot group, see Section \ref{sec:preliminaries} for definitions) and we prove

\begin{teorema}\label{thm:main}
  Given a homogeneous group $\mathbb{G}$ there is a homogeneous norm $||\cdot||$ on $\mathbb{G}$, 
  for which, whenever $\alpha\ge 0$ and $E\subset \mathbb{G}$ is such that $\mathscr{H}^\alpha(E) <+\infty$ the following are equivalent
  \begin{itemize}
      \item[(i)] $\alpha\in\N$ and $E$ is covered up to an $\mathscr{H}^\alpha$-null set by countably many Lipschitz images of compacts of $\R^\alpha$;
      \item[(\hypertarget{two}{ii})] $\Theta^\alpha(E,x) = 1$ for $\mathscr{H}^\alpha$-almost all $x\in E$.
  \end{itemize}
\end{teorema}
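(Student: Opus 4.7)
The implication (i) $\Rightarrow$ (ii) is standard and follows from Kirchheim's unit density theorem \cite{kircharea}, which holds in any metric space and in particular in $(\mathbb{G},||\cdot||)$. The task is therefore to establish the converse, and the plan is to follow the overall Marstrand-Mattila-Preiss strategy: at $\mathscr{H}^\alpha$-almost every point of $E$ extract a sufficiently rigid tangent measure, classify such measures (thereby forcing $\alpha$ to be an integer), and then use the existence of flat tangents to parametrise $E$ by Lipschitz images of Euclidean pieces. The main question is how each of these steps adapts to the non-commutative and anisotropic setting of homogeneous groups, and this is precisely where the choice of a smooth-box norm should become crucial.

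Set $\mu := \mathscr{H}^\alpha \llcorner E$. Using the intrinsic dilations $\delta_r$ of $\mathbb{G}$ one forms, at $\mu$-almost every $x\in E$, the rescaled blow-ups of $\mu$ at $x$ and, by weak-$*$ compactness, extracts tangent measures $\nu\in\Tan_\alpha(\mu,x)$. The unit density hypothesis $\Theta^\alpha(E,x)=1$ should propagate to any such $\nu$ in the rigid form
\[
    \nu(B(y,r)) = (2r)^\alpha \qquad \text{for every } y \in \supp\nu \text{ and every } r > 0,
\]
that is, $\nu$ is an $\alpha$-uniform measure with respect to $||\cdot||$. The proof then reduces to understanding the class $\mathcal{U}_\alpha$ of such uniform measures in $\mathbb{G}$.

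The heart of the argument is precisely this classification, and I expect it to be the main obstacle. The plan is to expand the uniformity identity $\nu(B(y,r))=(2r)^\alpha$ as a formal series in $r$, rewriting $\nu(B(y,r))$ via integrals against smooth approximations of the ball indicator and then Taylor-expanding; here the smoothness of $\partial B(0,1)$ is what justifies the expansion, in the spirit of Preiss-Kowalski \cite{Preiss1987GeometryDensities}. This should yield a hierarchy of moment-type conditions on $\nu$ which, combined with the scaling invariance coming from the group dilations, force $\supp\nu$ to be a homogeneous subgroup $\mathbb{V}\subset\mathbb{G}$ of homogeneous dimension $\alpha$ and $\nu$ to be proportional to $\mathscr{H}^\alpha\llcorner\mathbb{V}$. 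The box shape of $B(0,1)$ is what should rule out the pathological self-similar cones that could otherwise arise in the non-commutative setting, where, unlike in Euclidean space, no obvious symmetry group is available and the subgroup structure of the support has to be reconstructed purely from metric moment information.

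Once this classification of $\mathcal{U}_\alpha$ is in place, the rest falls into routine patterns. A homogeneous subgroup $\mathbb{V}$ of homogeneous dimension $\alpha$ carrying positive and locally finite $\mathscr{H}^\alpha$ must be bi-Lipschitz equivalent to a normed space $(\R^\alpha,|\cdot|_0)$; this forces $\alpha\in\N$ and shows that $\mu$-almost every point of $E$ admits a \emph{flat} tangent $\mathscr{H}^\alpha\llcorner\mathbb{V}(x)$. A Reifenberg- or Whitney-type construction, performed on compact subsets of $E$ where the flatness and the relative tilt between the planes $\mathbb{V}(x)$ are quantitatively controlled, then produces countably many Lipschitz maps from compacts of $\R^\alpha$ into $\mathbb{G}$ whose images cover $E$ up to an $\mathscr{H}^\alpha$-null set, completing the proof of (i).
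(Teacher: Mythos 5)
There is a genuine gap, and it sits exactly at the step you yourself flag as the heart of the argument: the classification of $\alpha$-uniform measures for the smooth-box norm. This is not a known result, and the method you propose cannot deliver it. The Preiss--Kowalski moment-expansion machinery depends on the analytic (quadratic) structure of the Euclidean ball; it already breaks down for general norms on $\R^n$, which is precisely why the density problem remains open even in that setting. The smooth-box ball is not a smooth hypersurface (the norm is a maximum of functions of the layers, so the boundary has corners where several terms achieve the max), so there is no Taylor expansion of the ball indicator to exploit. No classification of uniform measures is available in homogeneous groups in this generality; even in $\HH^1$ with the Koranyi metric such classifications are recent, hard theorems confined to specific dimensions. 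Moreover, your subsequent claim that a homogeneous subgroup $\mathbb{V}$ of homogeneous dimension $\alpha$ carrying positive locally finite $\mathscr{H}^\alpha$ must be bi-Lipschitz equivalent to $\R^\alpha$ is false: the vertical subgroup of $\HH^1$ has homogeneous dimension $2$ and carries positive locally finite $\mathscr{H}^2$, yet it is not bi-Lipschitz to $\R^2$ (the density of $\mathscr{H}^2$ on it equals $1/2$, which is also why unit density excludes it --- but your argument as written would not detect this).

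The paper avoids any classification of tangent measures. Its mechanism rests on two asymmetric facts. First, because Hausdorff measure is built from coverings by \emph{arbitrary} closed sets, Federer's density theorem (Proposition \ref{prop:federer-density}) gives the upper bound $\mathscr{H}^\alpha(E\cap S)\le (1+o(1))(\diam S)^\alpha$ for every small set $S$ containing the point, not just for balls; this is where using $\mathscr{H}^\alpha$ rather than the spherical measure is essential. Second, the smooth-box ball is far from isodiametric: there is an explicit set $G(0,1)\supset B(0,1)$ of diameter $2$ (Proposition \ref{prop:Gset}). Applying the first fact to $G(x,r)$ and the lower density bound to balls centred at nearby points $y\in E$, one shows (Propositions \ref{prop:ballradii} and \ref{prop:graphproperty}) that if $y$ deviated from the direction of the first layer $V_1$, a ball of definite radius around $y$ would sit inside $G(x,r)\setminus B(x,r)$, whose measure must be small --- a contradiction. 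Hence $E$ lies locally in an arbitrarily narrow cone around $V_1$, so it is a $(1+\eta)$-bi-Lipschitz graph over a subset of $V_1$ with its Euclidean norm. Projecting by $\pi_1$, the set $\pi_1(E)$ has unit density in Euclidean space, Mattila's theorem \cite{Mattila1975hausdorff} applies, and rectifiability is pulled back through the graph map; this also forces $\alpha\in\N$ with $\alpha$ at most $\dim V_1$. All the group-theoretic difficulty is absorbed into the construction of the norm and of $G$, and no blow-up analysis is required.
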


As in the Euclidean case, the hypothesis of the theorem implies that $\alpha$ is an integer, but here it also implies that there exists an $\alpha$ dimensional subgroup in the first layer of $\mathbb{G}$. For instance in the first Heisenberg group $\mathbb{H}^1$, the only possible values for $\alpha$ are $0$ and $1$, while the Hausdorff dimension of $\mathbb{H}^1$ is $4$.

\medskip

Our proof mainly relies on the following observations. First, the Carath\'eodory construction which yields the measure $\Haus^\alpha$ is obtained with coverings with \emph{ arbitrary closed sets}. This roughly means that if the $\Haus^\alpha$-measure of a set is finite, fixed a set $S$ of very small diameter, say $2r$, containing an $x\in E$, one must have that $\mathscr{H}^\alpha(E\cap S)\lesssim (2r)^\alpha$ , see Theorem \ref{prop:federer-density}. On the other hand, if one requires that item
(\hyperlink{two}{ii}) of Theorem \ref{thm:main} holds, one infers also that, $\mathscr{H}^\alpha(B(x,r)\cap E)\gtrsim (2r)^\alpha$, which means that if there exists a set containing the unit ball and with diameter still smaller than $2r$, let us denote is by $G$, we must have that $\Haus^\alpha(x*\delta_r(G)\setminus B(x,r)))/(2r)^\alpha$ is very small.

The second observation is that if one chooses properly the metric, then one can force the set $E$ with unitary Hausdorff density to be concentrated along specific directions. 
Our proof of this result relies on the fact that the unit ball for the norm $||\cdot||$ is very far from being isodiametric, more precisely we construct a set $G(0,1)$ with diameter $2$ and containing the unitary ball such that for any $x\in B(0,1)$ for which $\lvert x_1\rvert<1$, we have that $B(0,1)\cap x_1+V_2\oplus \ldots \oplus V_\kappa$ is \emph{compactly} contained in the interior of $G(0,1)\cap x_1+V_2\oplus \ldots \oplus V_\kappa$. This condition guarantees the existence of a bi-Lipschitz function $f$ from the first layer $V_1$ of the algebra of $\mathbb{G}$ endowed with the Euclidean metric to $\mathbb{G}$ such that $E$ is contained  in the image $f$. Finally, noting that one can choose the bi-Lipschitz constant of $f$ to be as close to $1$ as one wants, and this allows us in the end to apply Theorem \ref{thm:main}. For the details, see the proof of Theorem \ref{thm:main} at page \pageref{proofmain}.

\medskip

 A similar construction can be achieved in many other cases, for instance for the Koranyi norm in Heisenberg groups, see Proposition \ref{GforKoranyi}, but we choose to give proofs for the smooth-box distance, see  \eqref{eq:smoothbox}, since on the one hand it is a very natural metric widely used in the literature and on the other the construction of the set $G(0,1)$ can be provided with the same argument in any group. On the other hand, we stress that our construction does not work for the Carnot-Carathéodory distance in the Heisenberg group, even though the unit ball for this distance is also far from being isodiametric. 

\medskip

The isodiametric problem in a homogeneous group consists in maximizing the volume over all compact sets with diameter at most $2$. As discussed in the previous paragraph, it is clear that this problem is deeply connected to that of recognising what sets in said homogeneous groups can have unit density for the Hausdorff measure, see Theorem \ref{thm:measuresrectifiable}.
In a finite dimensional vector space, it is known that the unit ball is the unique maximizer, up to translations \cite{Bieberbach1915Isodiametric, Melnikov1963Isodiametric}. If $\mathbb{G}$ is a homogeneous group which is not a vector space we expect that the unit ball is not isodiametric for any homogeneous norm. Indeed, there are many examples of balls in homogeneous groups which are not isodiametric \cite{Metelichenko-Thesis,Rigot2011Isodiametric, LeonardiRigotVittone}, but we do not know of a proof which would work in general, not even if the group is the parabolic plane. We discuss this and give examples in Section \ref{sec:isodiametric}.

\medskip

  It is worth noting that our result is also relevant when passing from homogeneous groups to the more general context of metric space.  In \cite{LeDonne2011Unique} it is shown that the Gromov-Hausdorff tangent of a geodesic metric space with unique Gromov-Hausdorff tangents is a Carnot group, which in particular is a homogeneous group. Furthermore, this result has an extension to non necessarily geodesic metric spaces: it is proved in  \cite[Theorem 1.6]{LeDonneNiGo2021dilations} that if a doubling metric measure space has unique tangents at almost every point, then almost all these tangents are metric Lie groups admitting dilations. In Section~\ref{sec:groupwithdilations}, we will show that these groups are biLipschitz equivalent to groups for which the conclusion of Theorem~\ref{thm:main} holds.


\medskip

The structure of this paper is the following: in Section \ref{sec:preliminaries}, we present the notations and the basic results that we will use.  Section \ref{sec:surfacemeasure} contains a discussion on the comparison of the notions of surface measures on intrinsic rectifiable sets in Carnot groups. This motivates Section \ref{sec:isodiametric}, in which we review the isodiametric problem in homogeneous groups. Section \ref{sec:proof} is dedicated to the proof of Theorem \ref{thm:main}. In section~\ref{sec:groupwithdilations}, we adress the more general notion of groups with dilations. Finally in Section \ref{sec:h1} we focus on the first Heisenberg group and discuss the density problem in that setting.

\section{Preliminaries}\label{sec:preliminaries}
A homogeneous group $\mathbb{G}$ is a nilpotent Lie group which is graded, see \cite{LD17}, for precise definitions, in particular it can be decomposed as a direct sum:
\[
     \mathbb{G} = V_1 \oplus \dots \oplus V_\kappa,
   \]
where the finite dimensional vector spaces $V_\ell$ are called \textbf{ layers} and $\kappa$ is the \textbf{step} of $\mathbb{G}$. It will always be assumed that $V_1\neq \{0\}$, see Remark \ref{rem:V1neq0}.

\[
    \delta_\lambda (x_1,x_2,\dots,x_\kappa) = (\lambda x_1, \lambda^2 x_2,\dots , \lambda^\kappa x_\kappa).
\]
By the Baker-Campbell-Hausdorff formula, the group operation can be written as follows, for $x,y\in \mathbb{G}$ and $\ell \in \{1,\dots, \kappa\}$:
\[
     (x*y)_\ell = x_\ell + y_\ell + Q_\ell(x,y),
\]
where $Q_\ell$ is a $V_\ell$ valued polynomial function in the first $\ell-1$ components of $x$ and $y$, in particular, $Q_1=0$. There also holds: $Q_\ell(x,0)=Q_\ell(x,x)=0$ and $Q_\ell(\delta_\lambda x, \delta_\lambda y ) =\lambda^\ell Q_\ell(x,y)$ for $\lambda >0$.

\begin{definizione}[Homogeneous norm, homogeneous distance]
A distance $d:\mathbb{G}\times \mathbb{G}\to \R$ is \textbf{homogeneous} and \textbf{left-invariant} if for any $x,y\in \mathbb{G}$ there holds
\begin{itemize}
    \item[(i)] $d(\delta_\lambda x,\delta_\lambda y)=\lambda d(x,y)$ for any $\lambda>0$,
    \item[(ii)] $d(z *x,z* y)=d(x,y)$ for any $z\in \mathbb{G}$.
  \end{itemize}
  It is equivalent to define a homogeneous distance $d$ and a \textbf{homogeneous norm} $||\cdot||$, as can be seen by writing
  \[
       || x|| := d(0,x), \text{ or conversely } d(x,y):= ||x^{-1}*y||.
  \]
\end{definizione}

In the following, $d$ is a left-invariant homogeneous metric on $\mathbb G$ and the corresponding closed ball with center $x\in \mathbb G$ and radius $r>0$ is denoted by $\mathrm{B}_d(x,r)$. We will drop the subscript $d$ when there is no ambiguity. As the first layer $V_1$ is assumed to be non degenerate, the balls $B_d(x,r)$ have diameter $2r$. The following result, the elementary proof of which we learned in Metelichenko's thesis \cite[Lemma~3.17]{Metelichenko-Thesis}, will be very useful.
\begin{proposizione}
  If $||\cdot||$ is a homogeneous norm on $\mathbb{G}$ then its restriction $|\cdot|$ to $V_1$ is a vector space norm. Also, if $\pi_1$ denote the projection $\mathbb{G}\to V_1$, $(x_1,\dots,x_\kappa) \mapsto x_1$. Then $\pi_1$ is exactly $1$-Lipschitz with the norm $|\cdot|$ on $V_1$.
\end{proposizione}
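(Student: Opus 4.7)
I would organize the argument around the key estimate $|\pi_1(z)|\le||z||$ for every $z\in\mathbb{G}$, from which both the $1$-Lipschitz property of $\pi_1$ and the triangle inequality for $|\cdot|$ on $V_1$ follow immediately. The easy properties of $|\cdot|$ come first: positive definiteness is clear, positive homogeneity is $|\lambda v|=||\delta_\lambda v||=\lambda||v||=\lambda|v|$ for $\lambda\ge 0$, and the symmetry $|-v|=|v|$ reduces to the identity $||w^{-1}||=||w||$ (from left invariance and symmetry of $d$: $d(0,w^{-1})=d(w*0,w*w^{-1})=d(w,0)$) applied with $w=v$, using that $v^{-1}=-v$ in the exponential coordinates implicit in the paper.

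For the key estimate I would iterate the triangle inequality: $||z^n||\le n||z||$ for every $n\in\N$, whence $||\delta_{1/n}(z^n)||\le||z||$. Working in the exponential coordinates of the first kind (in which the BCH formula of the preliminaries holds verbatim), the group identity $\exp(Z)^n=\exp(nZ)$ translates to $z^n=(nz_1,\dots,nz_\kappa)$, so that
\[
\delta_{1/n}(z^n)=\bigl(z_1,\,z_2/n,\,\dots,\,z_\kappa/n^{\kappa-1}\bigr)\longrightarrow(z_1,0,\dots,0)=\pi_1(z)
\]
as $n\to\infty$ in the Euclidean topology of $\mathbb{G}$. Any homogeneous norm is continuous with respect to this topology, so passing to the limit gives $|\pi_1(z)|=||\pi_1(z)||\le||z||$.

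The two conclusions are then at hand. For $x,y\in\mathbb{G}$, the vanishing $Q_1\equiv 0$ gives $\pi_1(x^{-1}*y)=\pi_1(y)-\pi_1(x)$, so the estimate above yields $|\pi_1(y)-\pi_1(x)|\le||x^{-1}*y||=d(x,y)$; the constant $1$ is sharp already on any pair $0,v$ with $v\in V_1\setminus\{0\}$, since $|\pi_1(v)-\pi_1(0)|=|v|=||v||=d(0,v)$. The missing triangle inequality for $|\cdot|$ on $V_1$ is a direct consequence: for $u,v\in V_1$ one has $\pi_1(u*v)=u+v$ (again by $Q_1=0$), hence $|u+v|\le||u*v||\le||u||+||v||=|u|+|v|$. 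I do not foresee any substantive obstacle; the only non-routine ingredient is the identity $z^n=nz$ in exponential coordinates, which is precisely what makes the dilation factor $n^{-\ell}$ outgrow the higher-layer components of $z^n$ after renormalisation.
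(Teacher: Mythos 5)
Your proposal is correct and follows essentially the same route as the paper: both establish the key estimate $|\pi_1(z)|\le\lVert z\rVert$ by renormalising iterated products ($\delta_{1/n}(z^n)\to(z_1,0,\dots,0)$ in your version, repeated squaring-and-halving in the paper's) and then deduce the triangle inequality for $|\cdot|$ and the sharp Lipschitz bound exactly as you do. Your additional remarks on homogeneity, symmetry, and the sharpness of the constant are fine and only make explicit what the paper leaves implicit.
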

\begin{proof}
  As $\pi_1(x_1,0,\dots,0) = x_1,$ the projection $\pi_1$ cannot have Lipschitz constant smaller than one. In order to prove the opposite bound, note that $\pi_1$ is a group morphism, thus, it suffices to show that $\pi_1(x) \le ||x||$ for $x\in \mathbb{G}$. Fix $x=(x_1,\dots,x_\kappa)\in\mathbb{G}$. On the one hand, there holds $||\delta_{1/2}(x*x)|| = ||x*x||/2 \le ||x||$, on the other hand:
  \[
     \delta_{1/2}(x*x)= (x_1,2^{-1}x_2,\dots, 2^{1-\kappa} x_\kappa).
   \]
   Iterating this operation, and noting that $\delta_{1/2^n}((x*x)*\dots *(x*x)) \to (x_1, 0,\dots, 0)$, one gets $||(x_1,0,\dots,0)||\le ||x||$. Finally, given $x_1$ and $y_1$  in $V_1$ there holds
   \[
     |x_1+y_1|= ||\pi_1((x_1,0,\dots,0)*(y_1,0,\dots,0)) || \le ||(x_1,0,\dots,0)|| + ||(y_1,0,\dots,0)|| = |x_1|+|y_1|.
   \]
  and thus $|\cdot|$ is a vector-space norm on $V_1$ (homogeneity and positivity are immediate).
\end{proof}

\begin{definizione}[Hausdorff and spherical measures]\label{def:HausdorffMEasure}
For $\alpha\ge 0$, define the $\alpha$-dimensional \textbf{Hausdorff measure}  relative to the left invariant homogeneous distance $d$ 
$$
\mathscr{H}^\alpha_d(E)=\sup_{\delta>0}\mathscr{H}^\alpha_{d,\delta}(E):=\sup_{\delta>0}\inf \bigg\{\sum_{j=1}^{\infty} (\diam S_j)^\alpha:\ E \subseteq \bigcup_{j=1}^{\infty} S_j,\, S_j \text{ closed},\  \diam S_j\leq \delta\bigg\}.
$$

Along with the Hausdorff measure, it is useful to study other surface measure, we will focus on two of them: the $\alpha$-dimensional \textbf{ spherical measure}:
$$
\mathscr{S}^{\alpha}(A):=\sup_{\delta>0}\inf\bigg\{\sum_{j=1}^\infty  (2r_j)^\alpha:A\subseteq \bigcup_{j=1}^\infty \mathrm{B}_d(x_j,r_j),~r_j\leq\delta\bigg\}
$$
and the $\alpha$-dimensional \textbf{ centered spherical measure}:
$$
\mathscr{C}^{\alpha}(A):=\underset{E\subseteq A}{\sup}\,\,\mathscr{C}^\alpha_0(E),
$$
where
$$
\mathscr{C}^{\alpha}_{0}(E):=\sup_{\delta>0}\inf\bigg\{\sum_{j=1}^\infty  (2r_j)^\alpha:E\subseteq \bigcup_{j=1}^\infty B_d(x_j,r_j),~ x_j\in E,~r_j\leq\delta\bigg\}.
$$
\end{definizione}
These are all outer measure, and thus define Borel regular measures. The centered spherical measure is studied in see \cite[Proposition~4.1]{EdgarCentered}. Furthermore, they are comparable,  
as can be seen by covering any set of diameter $r$ by a ball of radius $2r$ centered at any of its points, which yields the following relations:
 \begin{equation}\label{eq:missuperficie}
     \mathscr{H}^h\leq \mathscr{S}^h\leq\mathscr{C}^h \leq 2^h\mathscr{H}^h.
 \end{equation}

\begin{definizione}[Lower and upper densities]\label{def:densities}
If $\phi$ is a Radon measure on $\mathbb{G}$, and $\alpha>0$, define the \textbf{lower} and \textbf{upper $\alpha$-densities of $\phi$ at $x$} by
\[
\Theta_*^{\alpha}(\phi,x):=\liminf_{r\to 0} \frac{\phi(B(x,r))}{(2r)^{\alpha}},\qquad \text{and}\qquad \Theta^{\alpha,*}(\phi,x):=\limsup_{r\to 0} \frac{\phi(B(x,r))}{(2r)^{\alpha}},
\]
 If the upper and lower densities coincide, their value is called the \textbf{density of $\phi$ at $x$}.
 If $\phi= \mathscr{H}^\alpha \llcorner E$, where $E$ is a subset of $\mathbb{G}$, its density (resp.~upper, lower densities) is also called \textbf{Hausdorff density of $E$} and denoted $\Theta^\alpha(E,x)$.
\end{definizione}

The following fundamental result on Hausdorff measures lies at the core of this paper. It is a consequence  of \cite[Theorems 2.10.17 and 2.10.18]{Federer1996GeometricTheory}, see also Theorem 2.1 and Lemma 3.1 in \cite{JNGV22coarea}.
\begin{proposizione}\label{prop:federer-density}
  Suppose that $E\subset \mathbb{G}$ satisfies $0<\mathscr{H}^\alpha(E) <+\infty$. Then for $\mathscr{H}^\alpha$-almost every $x\in E$, there holds:
  \begin{equation}\label{eq:federer-density}
      \lim_{\delta \to 0} \sup \bigg \{  \dfrac{\mathscr{H}^\alpha(E\cap S)}{(\diam S)^\alpha}, x\in S, \diam S <\delta\bigg\} \le 1.
    \end{equation}
    Furthermore, if $Q$ is the homogeneous dimension of $\mathbb{G}$, there holds
    \begin{equation*}
      \lim_{\delta \to 0} \sup \bigg \{  \dfrac{\mathscr{H}^Q(S)}{(\diam S)^Q}, x\in S, \diam S <\delta \bigg\} = 1.
    \end{equation*}
\end{proposizione}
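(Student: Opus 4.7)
The plan is to handle the two assertions in turn: the first is a direct transcription of Federer's general Carath\'eodory-construction density theorem to the homogeneous group setting, and the second exploits the left-Haar character of $\mathscr{H}^Q$ together with the matching dilation scalings of $\mathscr{H}^Q$ and $(\diam\cdot)^Q$.

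For the first assertion I would apply \cite[Theorem~2.10.18]{Federer1996GeometricTheory} directly. The Carath\'eodory construction of $\mathscr{H}^\alpha$ on $(\mathbb{G},d)$ uses the family of all closed subsets of $\mathbb{G}$ as gauge family and $\zeta(S)=(\diam S)^\alpha$ as size function, which is exactly the hypothesis of Federer's density theorem; its conclusion is precisely \eqref{eq:federer-density}. Concretely one argues by contradiction: if on a Borel subset $A\subset E$ with $\mathscr{H}^\alpha(A)>0$ the $\limsup$ in \eqref{eq:federer-density} exceeded some $t>1$, then for every small $\delta,\eta>0$ each $x\in A$ would lie in a closed $S_x\ni x$ with $\diam S_x<\delta$ and $\mathscr{H}^\alpha(E\cap S_x)>t(\diam S_x)^\alpha$. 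An efficient extraction from the family $\{S_x\}_{x\in A}$ then yields $t\,\mathscr{H}^\alpha_\delta(A)\le(1+\eta)\mathscr{H}^\alpha(E)$, contradicting $\mathscr{H}^\alpha_\delta(A)\to\mathscr{H}^\alpha(A)>0$ as $\delta\to 0^+$. This is the argument carried out in \cite[Theorem~2.1 and Lemma~3.1]{JNGV22coarea}; no feature specific to Euclidean spaces is used.

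For the second assertion I would define
\[
F(x,\delta):=\sup\bigl\{\mathscr{H}^Q(S)/(\diam S)^Q:\,x\in S,\ \diam S<\delta\bigr\}
\]
and exploit two structural properties of $\mathscr{H}^Q$: (i) left-invariance, inherited from the left-invariance of $d$, so that the bijection $S\mapsto z\ast S$ gives $F(x,\delta)=F(z\ast x,\delta)$ for every $z\in\mathbb{G}$, hence $F$ is $x$-independent; (ii) the dilation identities $\mathscr{H}^Q(\delta_\lambda S)=\lambda^Q\mathscr{H}^Q(S)$ and $\diam(\delta_\lambda S)=\lambda\diam S$, which preserve the ratio, and combined with (i) give $F(x,\delta)=F(x,\lambda\delta)$ for every $\lambda>0$. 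Consequently $F(x,\delta)\equiv F_0$ is a positive constant and the desired limit equals $F_0$. The first assertion, applied with $\alpha=Q$ and $E=B(0,1)$ (compact, hence of finite $\mathscr{H}^Q$-measure since $\mathscr{H}^Q$ is a left-Haar measure on $\mathbb{G}$), yields $F_0\le 1$. For the reverse inequality, if $F_0<1$ then $\mathscr{H}^Q(S)\le F_0(\diam S)^Q$ for every closed $S$ of positive diameter; applying $\sigma$-subadditivity to any $\delta$-cover $\{S_j\}$ of any set $A$ with $0<\mathscr{H}^Q(A)<\infty$ gives $\mathscr{H}^Q(A)\le F_0\sum_j(\diam S_j)^Q$, so on passing to the infimum and to $\delta\to 0^+$ one obtains $\mathscr{H}^Q(A)\le F_0\mathscr{H}^Q(A)$, a contradiction.

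The only technically delicate step is the efficient-covering extraction underlying the contradiction proof of the first assertion, which is the content of the classical Carath\'eodory density theorem and requires the standard care: the gauge family is the class of all closed sets, not just balls. Once that assertion is in hand, the second is essentially a clean algebraic consequence of the Haar character and dilation-homogeneity of $\mathscr{H}^Q$.
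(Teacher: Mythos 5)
Your proposal is correct and follows essentially the same route as the paper, which states this proposition without proof as a direct consequence of \cite[Theorems 2.10.17 and 2.10.18]{Federer1996GeometricTheory} together with Theorem 2.1 and Lemma 3.1 of \cite{JNGV22coarea}. Your left-invariance and dilation-scaling argument pinning the constant $F_0$ at exactly $1$ in the case $\alpha=Q$ is a correct, self-contained substitute for the covering-theorem citation that yields the lower bound in those references.
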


There is a degenerate case, which needs to be ruled out:
\begin{proposizione}\label{rem:V1neq0}
If $V_1=\{0\}$, then sets with unit Hausdorff density are necessarily of dimension $0$.
\end{proposizione}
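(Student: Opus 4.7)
The plan is to argue by contradiction. Suppose $E \subset \mathbb{G}$ satisfies $0 < \mathscr{H}^\alpha(E) < \infty$ and $\Theta^\alpha(E,x) = 1$ for $\mathscr{H}^\alpha$-almost every $x \in E$, with $\alpha > 0$; the goal is to derive a contradiction from $V_1 = \{0\}$.

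First, apply Federer's density estimate (Proposition~\ref{prop:federer-density}) with $S = E \cap B(x,r)$ at a generic density point $x \in E$, and combine with the unit density hypothesis:
\[
(1-\epsilon)(2r)^\alpha \;\le\; \mathscr{H}^\alpha(E \cap B(x,r)) \;\le\; (1+\epsilon)(\diam B(x,r))^\alpha = (1+\epsilon)\bigl(r \cdot \diam B(0,1)\bigr)^\alpha
\]
for all small $r$. Letting $r \to 0$ and then $\epsilon \to 0$ forces $\diam B(0,1) \geq 2$, and since the reverse inequality always holds by the triangle inequality, $\diam B(0,1) = 2$. By compactness of the closed unit ball in $\mathbb{G}$ and continuity of $(x,y)\mapsto\|x^{-1}y\|$, this diameter is attained at some pair $(a,b) \in B(0,1)^2$, which must then satisfy $\|a\| = \|b\| = 1$ and $\|a^{-1}b\| = \|a^{-1}\| + \|b\| = 2$---that is, equality in the triangle inequality for the pair $(a^{-1}, b)$.

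The remaining task, which I expect to be the hard part, is to show that this equality is incompatible with $V_1 = \{0\}$. The plan is to exploit the sublinearity of the norm along powers: since $V_1 = \{0\}$, using the identity $x^n = n\,x$ in exponential coordinates together with the dilation $\delta_{1/\sqrt n}$, one sees that $\delta_{1/\sqrt n}(x^n) \to (0, x_2, 0, \dots, 0)$ in coordinates for every $x \in \mathbb{G}$, so the slower rescaling reveals the first nontrivial layer. The projection $\pi_2\colon\mathbb{G} \to V_2$ is a group homomorphism to the abelian group $(V_2,+)$ because the polynomial $Q_2$ depends only on $V_1$-components and hence vanishes identically, and the restriction of $\|\cdot\|$ to the subset $\{(0,y,0,\dots,0) : y \in V_2\}$ induces a $\tfrac12$-homogeneous subadditive function $\rho$ on $V_2$ that is strictly subadditive on nonzero vectors, as an elementary consequence of the strict concavity of $t \mapsto \sqrt t$. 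Combined with an appropriate Lipschitz-type control on $\pi_2$ (obtained by adapting the iteration used in the proposition that $|\pi_1(x)|\le\|x\|$, but now taking squares at the slower rate $\delta_{1/\sqrt n}$), this strict subadditivity transfers to $\mathbb{G}$ and forces $\pi_2(a^{-1}b) = 0$ in the equality case. Iterating the same argument on successive layers $V_3,\dots,V_\kappa$---using that when $V_1 = \{0\}$ the lower brackets in the BCH formula vanish, so that each truncation $\pi_{\le j}$ remains a homomorphism onto a subquotient with an analogous $1/j$-homogeneous strictly subadditive norm---forces $a^{-1}b = 0$, contradicting $\|a^{-1}b\| = 2$ and completing the proof.
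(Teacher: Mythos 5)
Your first two steps are correct, and they are in substance the paper's own argument run in the contrapositive: the paper combines Proposition \ref{prop:federer-density} with the unit-density hypothesis to contradict a strict diameter bound $\diam B(x,r)\le (1-\varepsilon)2r$, while you use the same two ingredients to conclude $\diam B(0,1)=2$, attained at a pair $a,b$ with $\lVert a\rVert=\lVert b\rVert=1$ and $\lVert a^{-1}*b\rVert=2$. The gap is in your third step, and it cannot be repaired along the lines you sketch, because the statement you set out to prove there --- that a nontrivial equality case in the triangle inequality is incompatible with $V_1=\{0\}$ --- is false. Take $\mathbb{G}=\R^2$ with the abelian group law, graded by $V_1=\{0\}$, $V_2=\R^2$ (so $\delta_\lambda y=\lambda^2 y$), and set
\[
\lVert (y_1,y_2)\rVert:=\sqrt{|y_1|}+\sqrt{|y_2|}.
\]
This is a genuine homogeneous norm: it is positive, symmetric, satisfies $\lVert\delta_\lambda y\rVert=\lambda\lVert y\rVert$, and is subadditive because $\sqrt{|s+t|}\le\sqrt{|s|}+\sqrt{|t|}$ in each coordinate. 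Yet $a=(1,0)$ and $b=(0,-1)$ lie in the unit ball and $\lVert b-a\rVert=\lVert(-1,-1)\rVert=2=\lVert a\rVert+\lVert b\rVert$, so the unit ball has diameter exactly $2$ although $V_1=\{0\}$. This example also isolates the precise false claim in your sketch: strict concavity of $t\mapsto\sqrt{t}$ yields strict subadditivity only for \emph{positively proportional} vectors; as soon as $\dim V_2\ge 2$, a $\tfrac12$-homogeneous subadditive function can be additive on non-colinear pairs, exactly as this norm is at $(1,0)$ and $(0,1)$.

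There are further gaps even when $\dim V_2=1$, where strict subadditivity does hold. First, the restriction of $\lVert\cdot\rVert$ to $\{(0,y,0,\dots,0):y\in V_2\}$ need not be subadditive at all, since that set is not a subgroup unless $[V_2,V_2]=\{0\}$; the correct object is the quotient norm induced by the homomorphism $\pi_2$. Second, your proposed adaptation of the first-layer iteration gives no Lipschitz-type control: one only obtains $\lVert\delta_{1/\sqrt n}(x^n)\rVert\le n\lVert x\rVert/\sqrt n=\sqrt n\,\lVert x\rVert$, which diverges, in contrast with the first-layer case where the factor is exactly $1$. Third, even granting a strictly subadditive $\rho$ together with $\rho(\pi_2(\cdot))\le\lVert\cdot\rVert$, the conclusion $\pi_2(a^{-1}*b)=0$ does not follow: all the inequalities at your disposal bound $\rho(\pi_2(a^{-1}*b))$ from \emph{above}, and an upper bound creates no tension with $\lVert a^{-1}*b\rVert=2$. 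Finally, a remark on the comparison with the paper: the paper's proof obtains $\diam B(0,1)\le 2^{1/\ell}<2$ from the assertion that $\lVert\cdot\rVert^{\ell}$ is a norm for the rescaled dilations, and the example above shows that this assertion likewise fails for a general homogeneous norm ($(\sqrt{|y_1|}+\sqrt{|y_2|})^{2}$ is not subadditive: test $(1,0)$ and $(0,1)$), although it does hold for max-type norms such as the smooth-box norm \eqref{eq:smoothbox}. So the obstruction you ran into is genuine and not an artifact of your method: for an arbitrary homogeneous norm, a proof of the proposition must extract more from the unit-density hypothesis than the single equality $\diam B(0,1)=2$.
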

\begin{proof}
    First, note that a ball of radius $r$ in such a group necessarily has diameter strictly less than $2r$. Indeed letting $\ell$ be the index of the first non-trivial layer, $||\cdot||^{\ell}$ is a norm on the homogeneous group $\mathbb{G}'$, obtained from $\mathbb{G}$ by changing the homogeneity of the dilations. The diameter of the unit ball in $\mathbb{G}'$ is $2$ because the group has non trivial first layer, thus its diameter in $\mathbb{G}$ is $(2)^{1/\ell}$, which is smaller than $2$. By homogeneity, there exists $\varepsilon>0$ such that $\diam(B(x,r)) \le (1-\varepsilon) (2r)$ for all $x\in \mathbb{G}$ and all $r>0$. 
    Using the first statement of Proposition \ref{prop:federer-density} at a point where it holds, for small enough $r>0$, one gets
    \[
        \mathscr{H}^\alpha(E\cap B(x,r)) \le (1+\varepsilon)^\alpha \diam B(x,r)^\alpha < (1+\varepsilon)^\alpha(1-\varepsilon)^\alpha(2r)^\alpha = (1-\varepsilon^2)^\alpha (2r)^\alpha.
    \]
    Dividing by $(2r)^\alpha$ and letting $r$ go to zero contradicts the Hausdorff density hypothesis.
\end{proof}

\section{Comparison of surface measures in Carnot groups}\label{sec:surfacemeasure}

Throughout this section $\mathbb{G}$ will be a fixed Carnot group and $d$ a fixed homogeneous left-invariant distance on $\mathbb{G}$.

\begin{definizione}[$C^1_{\mathrm H}(\mathbb{G}, \mathbb{G}')$-rectifiable sets.]\label{def:C1h}
Let $\mathbb G'$ be another Carnot groups endowed with  the left-invariant homogeneous distance $d'$. Let $\Omega\subseteq \mathbb G$ be open and let $f:\Omega\to\mathbb G'$ be a function. We say that $f$ is \textbf{Pansu differentiable at $x\in\Omega$} if there exists a homogeneous homomorphism $Df_x:\mathbb G\to\mathbb G'$ such that
$$
\lim_{y\to x}\frac{d'(f(x)^{-1}\cdot f(y),Df_x(x^{-1}\cdot y))}{d(x,y)}=0.
$$
Moreover we say that $f$ is \textbf{of class $C^1_{\mathrm H}$ in $\Omega$} if the map $x\mapsto Df_x$ is continuous from $\Omega$ to the space of homogeneous homomorphisms from $\mathbb G$ to $\mathbb G'$. 
Given an arbitrary Carnot group $\mathbb G$, we say that $\Gamma\subseteq \mathbb G$ is a \textbf{ $C^1_{\mathrm H}$-submanifold} of $\mathbb G$ if there exists a Carnot group $\mathbb G'$ such that for every $p\in \Gamma$ there exists an open neighborhood $\Omega$ of $p$ and a function $f\in C^1_{\mathrm H}(\Omega;\mathbb G')$ such that 
\begin{equation}\label{eqn:RepresentationOfSigma}
\Gamma\cap \Omega =\{g\in\Omega:f(g)=0\},
\end{equation}
and $Df_p:\mathbb G\to\mathbb G'$ is surjective and ${\mathrm Ker}(Df_p)$ admits a complementary subgroup. In this case we say that $\Gamma$ is a \textbf{ $C^1_{\mathrm H}(\mathbb G,\mathbb G')$-submanifold}. 
Finally, we say that $\Gamma\subseteq \mathbb G$ is a \textbf{$(\mathbb G,\mathbb G')$-rectifiable set} if  $\mathscr{H}^{Q-Q'}(\Gamma) <+\infty$ and there exist countably many subsets $\Gamma_i$ of $\mathbb G$ that are $C^1_{\mathrm H}(\mathbb G,\mathbb G')$-submanifolds, such that 
$$
\mathscr{H}^{\mathcal{Q}-\mathcal{Q}'}\left(\Gamma\setminus\bigcup_{i=1}^{+\infty}\Gamma_i\right)=0.
$$
At $\mathscr{H}^{Q-Q'}\llcorner \Gamma$-almost every point $x$, $\Gamma$ admits a tangent $\mathbb{V}(x)$, which is a homogeneous subgroup of $\mathbb{G}$ with homogeneous dimension $Q-Q'$.
\end{definizione}

\begin{definizione}[Tangent measures] \label{def:TangentMeasure}
Let $\phi$ be a Radon measure on $\mathbb G$. For any $x\in\mathbb G$ and any $r>0$ we define the rescaled measure
$$
T_{x,r}\phi(E):=\phi(x*\delta_r(E)), \qquad\text{for any Borel set }E\subset \mathbb{G}.
$$
The \emph{$\alpha$-dimensional tangents} to $\phi$ at $x$ are the Radon measures $\nu$ which are weak limits of a sequence $r_i^{-\alpha} T_{x,r_i} \phi$ for $r_i\to 0$. The set of these measures is denoted by $\mathrm{Tan}_{\alpha}(\phi,x)$.
\end{definizione}

\begin{definizione}\label{prectifiable}
Let $\phi$ be a Radon measure in $\mathbb{G}$ and let $h\in\N$. We say that $\phi$ is $\mathscr{P}_h$-rectifiable if
\begin{itemize}
    \item[(i)] $0<\Theta^h_*(\mu,x)\leq\Theta^{h,*}(\mu,x)<\infty $ for $\phi$-almost every $x\in\mathbb{G}$;
    \item[(\hypertarget{ii}{ii})] for $\phi$-almost every $x\in\mathbb{G}$ there exists an homogeneous subgroup $\mathbb V(x)$ of $\mathbb{G}$ of Hausdorff dimension $h\in\N$ such that
    $\mathrm{Tan}_h(\phi,x)\subseteq \{\lambda \mathscr{H}^h\llcorner \mathbb V(x):\lambda>0\}$.
\end{itemize}
\end{definizione}

The various surface measures on $(\mathbb{G},\mathbb{G}')$-rectifiable sets are now well understood, as summed up in the following statement which is a combination of  \cite[Theorem 1.1]{AntonelliMerlo2021}, \cite[Proposition 5.2]{antonelli2020rectifiableA}, as well as the results of \cite{JNGV22coarea}.

\begin{teorema}\label{thm:measuresrectifiable}
Suppose that $\Gamma\subseteq\mathbb G$ is a Borel, $(\mathbb{G},\mathbb{G}')$-rectifiable set. Then $\mathscr{H}^{Q-Q'}\llcorner \Gamma$, $\mathscr S^{Q-Q'}\llcorner \Gamma$ and $\mathscr C^{Q-Q'} \llcorner \Gamma$ are $\mathscr{P}_{Q-Q'}$-rectifiable. Furthermore,  at almost every point $x\in \Gamma$
\begin{equation}\label{eq:centeredsphericaldensity}
 \Theta^{Q-Q'}(\mathscr{C}^{Q-Q'}\llcorner\Gamma,x)=1.
\end{equation}
Finally, letting $\mathbb{V}(x)$ be the $Q-Q'$ subgroup for which (\hyperlink{ii}{ii}) is satisfied
\begin{eqnarray}
\mathscr{H}^{Q-Q'}\llcorner \Gamma =
\mathcal{A}(\mathscr{C}^{Q-Q'}\llcorner \mathbb{V}(\cdot))\mathscr{C}^{Q-Q'}\llcorner \Gamma,\label{eq:HvsC}\quad\text{and}\quad
\mathscr{H}^{Q-Q'}\llcorner \Gamma = \mathcal{A}(\mathscr{S}^{Q-Q'}\llcorner\mathbb{V}(\cdot))\mathscr{S}^{Q-Q'}\llcorner \Gamma,\nonumber
\end{eqnarray}
where for a homogeneous subgroup $\mathbb{V}$ of $\mathbb{G}$ with dimension $Q-Q'$ and a Haar measure $\mu$ on $\mathbb{V}$ we write:
\begin{equation*}
\mathcal{A}(\mu) := \sup \{ (\diam S)^{-(Q-Q')}\mu (S), \ 0< \diam S < +\infty \}.
\end{equation*}
\end{teorema}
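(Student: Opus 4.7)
The plan is to assemble the three assertions from the cited works, using the blow-up structure provided by $(\mathbb{G},\mathbb{G}')$-rectifiability. First, for the $\mathscr{P}_{Q-Q'}$-rectifiability of each of $\mathscr{H}^{Q-Q'}\llcorner\Gamma$, $\mathscr{S}^{Q-Q'}\llcorner\Gamma$ and $\mathscr{C}^{Q-Q'}\llcorner\Gamma$, I would fix a covering of $\Gamma$, up to an $\mathscr{H}^{Q-Q'}$-null set, by $C^1_{\mathrm H}$-submanifolds $\Gamma_i$ as in Definition \ref{def:C1h} and work at a density point $x\in\Gamma_i$. The Pansu differential at $x$ of the defining map in \eqref{eqn:RepresentationOfSigma} produces the tangent homogeneous subgroup $\mathbb{V}(x)$, and an implicit-function-type argument shows that $\delta_{1/r}(x^{-1}*\Gamma)$ converges to $\mathbb{V}(x)$ locally in Hausdorff distance as $r\to 0$. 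Combining this geometric convergence with the area-type estimates of \cite{AntonelliMerlo2021}, the rescaled measures $r^{-(Q-Q')}T_{x,r}(\mathscr{H}^{Q-Q'}\llcorner\Gamma)$ converge weakly to a positive multiple of $\mathscr{H}^{Q-Q'}\llcorner\mathbb{V}(x)$, verifying item (ii) of Definition \ref{prectifiable}. The comparability \eqref{eq:missuperficie} transfers the conclusion verbatim to $\mathscr{S}^{Q-Q'}\llcorner\Gamma$ and $\mathscr{C}^{Q-Q'}\llcorner\Gamma$, as done in \cite{antonelli2020rectifiableA}; the density bounds (i) follow from Proposition \ref{prop:federer-density} on the upper side and from the Hausdorff convergence of the blow-ups on the lower side.

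Second, the identity \eqref{eq:centeredsphericaldensity} is a general property of centered spherical measures: by \cite{EdgarCentered}, for any set $A$ with $0<\mathscr{C}^\alpha(A)<+\infty$ one has $\Theta^\alpha(\mathscr{C}^\alpha\llcorner A,x)=1$ at $\mathscr{C}^\alpha$-a.e.\ point of $A$. Since $\mathscr{C}^{Q-Q'}\llcorner\Gamma$ and $\mathscr{H}^{Q-Q'}\llcorner\Gamma$ are mutually absolutely continuous (both are $\mathscr{P}_{Q-Q'}$-rectifiable with the same tangent subgroups by the first step), this yields \eqref{eq:centeredsphericaldensity} at $\mathscr{H}^{Q-Q'}$-a.e.\ $x\in\Gamma$.

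Finally, for the Radon--Nikodym identities \eqref{eq:HvsC}, write $\mathscr{H}^{Q-Q'}\llcorner\Gamma=f\cdot\mathscr{C}^{Q-Q'}\llcorner\Gamma$. Using the previous two steps, the differentiation theorem gives
\begin{equation*}
f(x)=\lim_{r\to 0}\frac{\mathscr{H}^{Q-Q'}(\Gamma\cap B(x,r))}{(2r)^{Q-Q'}}
\end{equation*}
at a.e.\ $x\in\Gamma$, and the blow-up from the first step identifies this limit with the density at the origin of $\mathscr{H}^{Q-Q'}\llcorner\mathbb{V}(x)$. A Vitali-type argument on the homogeneous group $\mathbb{V}(x)$, using coverings by nearly-isodiametric sets in the Carath\'eodory construction of $\mathscr{H}^{Q-Q'}$, then shows that this last quantity equals $\mathcal{A}(\mathscr{C}^{Q-Q'}\llcorner\mathbb{V}(x))$; the spherical identity is obtained by the same reasoning with $\mathscr{S}^{Q-Q'}$ in place of $\mathscr{C}^{Q-Q'}$. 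The main obstacle is the uniformity of the blow-up convergence on large compact sets, needed to replace $\mathscr{H}^{Q-Q'}(\Gamma\cap B(x,r))$ by its tangent-space counterpart in the limit. This is where the $C^1_{\mathrm H}$-regularity (rather than mere pointwise Pansu-differentiability) is essential and is handled in \cite{JNGV22coarea}.
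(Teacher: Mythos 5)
The paper itself offers no argument for this theorem: it is stated as a combination of \cite[Theorem 1.1]{AntonelliMerlo2021}, \cite[Proposition 5.2]{antonelli2020rectifiableA} and the results of \cite{JNGV22coarea}, so your proposal is a reconstruction of those arguments. Its skeleton (blow-up at density points of the $C^1_{\mathrm H}$ pieces, transfer between the three measures via \eqref{eq:missuperficie}, differentiation of measures for \eqref{eq:HvsC}) is the right one, but two of your key justifications are genuinely wrong.

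First, the statement you attribute to \cite{EdgarCentered} is false. What holds for a general set $A$ with $0<\mathscr{C}^\alpha(A)<+\infty$ is only that the \emph{upper} density satisfies $\Theta^{\alpha,*}(\mathscr{C}^\alpha\llcorner A,x)=1$ at $\mathscr{C}^\alpha$-almost every point; the two-sided limit in \eqref{eq:centeredsphericaldensity} cannot be a general property of centered measures. Indeed, for a self-similar Cantor set $A\subset\R$ of non-integer dimension $s$, the measure $\mathscr{C}^s\llcorner A$ is a constant multiple of $\mathscr{H}^s\llcorner A$, and by Marstrand's theorem \cite{Marstrand1964Regularsubsets} no such measure can have a positive finite $s$-density on a set of positive measure, so the lower density is strictly below the upper one almost everywhere. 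Equation \eqref{eq:centeredsphericaldensity} is precisely the point where rectifiability enters: the \emph{existence} of the limit comes from the flat, unique tangents (the $\mathscr{P}_{Q-Q'}$-rectifiability of your first step, which makes $r^{-(Q-Q')}T_{x,r}(\mathscr{C}^{Q-Q'}\llcorner\Gamma)$ converge to a Haar measure on $\mathbb{V}(x)$, hence makes the ball ratios converge), and only then does the upper-density theorem of \cite{EdgarCentered} pin its value to $1$. Your appeal to mutual absolute continuity does not repair this: it transports null sets, not the existence of limits.

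Second, in your last step the Vitali--Federer covering argument you invoke shows that for any Haar measure $\mu$ on $\mathbb{V}$ one has $\mathscr{H}^{Q-Q'}\llcorner\mathbb{V}=\mathcal{A}(\mu)^{-1}\mu$; applied to $\mu=\mathscr{C}^{Q-Q'}\llcorner\mathbb{V}(x)$, it identifies the density at the origin of $\mathscr{H}^{Q-Q'}\llcorner\mathbb{V}(x)$ with $\mathcal{A}(\mathscr{C}^{Q-Q'}\llcorner\mathbb{V}(x))^{-1}$, not with $\mathcal{A}(\mathscr{C}^{Q-Q'}\llcorner\mathbb{V}(x))$ as you assert. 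A consistency check makes the direction unambiguous: by \eqref{eq:missuperficie} the Radon--Nikodym derivative of $\mathscr{H}^{Q-Q'}\llcorner\Gamma$ with respect to $\mathscr{C}^{Q-Q'}\llcorner\Gamma$ is at most $1$, whereas $\mathcal{A}(\mathscr{C}^{Q-Q'}\llcorner\mathbb{V})\ge 1$ always (test $\mathcal{A}$ on centered balls of $\mathbb{V}$ and use the upper-density theorem). So the computation you sketch, carried out correctly, lands on the reciprocal of the coefficient you claim, i.e.\ it proves $\mathscr{C}^{Q-Q'}\llcorner\Gamma=\mathcal{A}(\mathscr{C}^{Q-Q'}\llcorner\mathbb{V}(\cdot))\,\mathscr{H}^{Q-Q'}\llcorner\Gamma$. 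Whether this signals a slip in your identification of the blow-up limit or indicates how the displayed formula must be read (note that Corollary \ref{cor:isodiam} is insensitive to the inversion, since there the coefficient is shown to equal $1$), you cannot simply assert agreement with \eqref{eq:HvsC}: as written, the final identification is not what your own argument delivers, and this has to be resolved explicitly.
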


The above result tells us that the coefficients that allows to express the Hausdorff measure on smooth surfaces in terms of the centred Hausdorff measure or the spherical Hausdorff measure are almost everywhere the volume (with respect to the centred or the sperical measures) of the isodiametric set in the tangents.

\begin{corollario}\label{cor:isodiam}
Under the hypotheses of Theorem \ref{thm:measuresrectifiable} and at a point $x$ where its conclusions holds, suppose that  
\begin{equation}\label{eq:rectifHausdorffdensity}
\Theta^{Q-Q'}(\mathscr{H}^{Q-Q'}\llcorner \Gamma, x)=1.
\end{equation}
Then the intersection of $B(0,1)$ with $\mathbb{V}(x)$ is an isodiametric shape in the homogeneous group $\mathbb{V}(x)$ with the  metric given by the restriction of $d$.
\end{corollario}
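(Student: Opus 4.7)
The plan is to exploit the blow-up structure of $\mathscr{H}^{Q-Q'}\llcorner\Gamma$ at $x$ given by $\mathscr{P}_{Q-Q'}$-rectifiability, and combine it with the Federer--Preiss upper density estimate of Proposition \ref{prop:federer-density}, in order to transport an isodiametric inequality from $\mathbb{G}$ onto $\mathbb{V}(x)$, in which the unit ball will automatically be extremal. By Theorem \ref{thm:measuresrectifiable}, $\mathscr{H}^{Q-Q'}\llcorner\Gamma$ is $\mathscr{P}_{Q-Q'}$-rectifiable at $x$, so one can extract $r_n\downarrow 0$ along which the rescaled measures $\mu_n:=r_n^{-(Q-Q')}T_{x,r_n}(\mathscr{H}^{Q-Q'}\llcorner\Gamma)$ converge weakly to $\lambda\,\mathscr{H}^{Q-Q'}\llcorner\mathbb{V}(x)$ for some $\lambda>0$. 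Setting $V:=\mathscr{H}^{Q-Q'}(B(0,1)\cap\mathbb{V}(x))$, testing this convergence on the unit ball (handling the boundary mass by squeezing between slightly smaller open and slightly larger closed balls) together with the hypothesis $\Theta^{Q-Q'}(\mathscr{H}^{Q-Q'}\llcorner\Gamma,x)=1$ yields $\lambda V = 2^{Q-Q'}$.

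Next, for any open set $U\subseteq \mathbb{G}$ with $0\in U$ and bounded diameter, Proposition \ref{prop:federer-density} applied at $x$ to the shrunk sets $x*\delta_{r_n}U$ (which contain $x$ and have diameter $r_n\diam U$) gives $\mu_n(U)\le(1+o(1))(\diam U)^{Q-Q'}$, while weak convergence on the open set $U$ gives $\lambda\,\mathscr{H}^{Q-Q'}(U\cap\mathbb{V}(x))\le\liminf_n\mu_n(U)$. Combined, this yields
\[
\lambda\,\mathscr{H}^{Q-Q'}(U\cap\mathbb{V}(x))\;\le\;(\diam U)^{Q-Q'}.
\]
Left-invariance of $\mathscr{H}^{Q-Q'}$ on $\mathbb{V}(x)$ lets us drop the constraint $0\in U$ (translate by a point of $U\cap\mathbb{V}(x)$), and outer regularity (through open $\varepsilon$-thickenings, whose diameter exceeds the original by at most $2\varepsilon$) allows us to replace $U$ by an arbitrary bounded $S\subseteq \mathbb{V}(x)$ of positive diameter, obtaining $\lambda\,\mathscr{H}^{Q-Q'}(S)\le(\diam S)^{Q-Q'}$.

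Finally, substituting $1/\lambda = V/2^{Q-Q'}$ from the first step, the previous inequality reads
\[
\frac{\mathscr{H}^{Q-Q'}(S)}{(\diam S)^{Q-Q'}}\;\le\;\frac{V}{2^{Q-Q'}}\;=\;\frac{\mathscr{H}^{Q-Q'}(B(0,1)\cap\mathbb{V}(x))}{(\diam B(0,1))^{Q-Q'}}
\]
for every admissible $S\subseteq\mathbb{V}(x)$, which is exactly the statement that $B(0,1)\cap\mathbb{V}(x)$ is an isodiametric shape in $\mathbb{V}(x)$ (and as a by-product $\lambda=1$, so the blow-up of $\mathscr{H}^{Q-Q'}\llcorner\Gamma$ at $x$ is $\mathscr{H}^{Q-Q'}\llcorner\mathbb{V}(x)$ itself). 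The main delicate point I expect is the bookkeeping around boundary mass in the blow-up: one must use open sets for the weak-convergence lower bound and essentially closed ones for the Federer upper bound, bridged by standard thickening/squeezing arguments; once this is handled cleanly, the rest of the proof is short.
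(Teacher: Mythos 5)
Your argument is correct, but it takes a genuinely different route from the paper's. The paper's proof is essentially two lines long because it leans on the full strength of Theorem~\ref{thm:measuresrectifiable}: formula \eqref{eq:HvsC} says that $\mathscr{H}^{Q-Q'}\llcorner\Gamma$ and $\mathscr{C}^{Q-Q'}\llcorner\Gamma$ differ exactly by the area factor $\mathcal{A}(\mathscr{C}^{Q-Q'}\llcorner\mathbb{V}(x))$, and \eqref{eq:centeredsphericaldensity} says that the centered density is $1$; hence the hypothesis \eqref{eq:rectifHausdorffdensity} forces $\mathcal{A}(\mathscr{C}^{Q-Q'}\llcorner\mathbb{V}(x))=1$, i.e.\ the isodiametric supremum over all subsets of $\mathbb{V}(x)$ equals $1$, and the blow-up identity for the centered measure shows that $B(0,1)\cap\mathbb{V}(x)$ attains it. You bypass the centered spherical measure and the factor $\mathcal{A}$ altogether and reconstruct this density transfer by hand: from the $\mathscr{P}_{Q-Q'}$-rectifiability part of Theorem~\ref{thm:measuresrectifiable} you extract a flat blow-up $\lambda\,\mathscr{H}^{Q-Q'}\llcorner\mathbb{V}(x)$; the density hypothesis, via the open/closed squeeze (legitimate, since dilation homogeneity of the Haar measure on $\mathbb{V}(x)$ makes spheres null), gives $\lambda V=2^{Q-Q'}$; and Proposition~\ref{prop:federer-density} applied to the sets $x*\delta_{r_n}U$, combined with lower semicontinuity of weak convergence on open sets, left invariance and thickening, gives $\lambda\,\mathscr{H}^{Q-Q'}(S)\le(\diam S)^{Q-Q'}$ for every bounded $S\subseteq\mathbb{V}(x)$. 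What your route buys is self-containedness: you use only the flat-tangent statement from Theorem~\ref{thm:measuresrectifiable} plus the elementary Proposition~\ref{prop:federer-density}, and never the (much deeper) measure-comparison formula \eqref{eq:HvsC}. What the paper's route buys is brevity and a cleanly pointwise statement.

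Two caveats. First, Proposition~\ref{prop:federer-density} is an almost-everywhere statement about $\Gamma$, and its validity at $x$ is not among ``the conclusions of Theorem~\ref{thm:measuresrectifiable}''; as written, your proof therefore establishes the corollary only at $\mathscr{H}^{Q-Q'}$-almost every point satisfying \eqref{eq:rectifHausdorffdensity}, not at every point where the conclusions of Theorem~\ref{thm:measuresrectifiable} hold, which is what the statement asserts. This is harmless for any use of the corollary and is repaired by adding \eqref{eq:federer-density} at $x$ to the list of (almost-everywhere) properties required of $x$, but it should be flagged. Second, your parenthetical claim that $\lambda=1$ does not follow from what you wrote: it additionally requires $\sup_S \mathscr{H}^{Q-Q'}(S)/(\diam S)^{Q-Q'}=1$ inside $\mathbb{V}(x)$, which is the second statement of Proposition~\ref{prop:federer-density} applied to the homogeneous group $\mathbb{V}(x)$; fortunately nothing in your argument depends on $\lambda=1$.
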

\begin{proof}
Combining \eqref{eq:centeredsphericaldensity}, \eqref{eq:HvsC} and the Hausdorff density hypothesis \eqref{eq:rectifHausdorffdensity} yields 
\begin{equation}\label{eq:rectifdensity1}
1= \mathcal{A}(\mathscr{C}^{Q-Q'}\llcorner \mathbb{V}(x)) =\sup \Big \{ \dfrac{(\mathscr{C}^{Q-Q'}\llcorner \mathbb{V}(x) )(S)}{(\diam S)^{(Q-Q')}}, \ 0< \diam S < +\infty \Big \},
\end{equation}
but by the fact that $\mathscr{C}^{Q-Q'}\llcorner \Gamma$ is $\mathscr{P}^{Q-Q'}$-rectifiable and \eqref{eq:centeredsphericaldensity}, there also holds
\[
\dfrac{(\mathscr{C}^{Q-Q'}\llcorner \mathbb{V}(x) )(B(0,1))}{2^{(Q-Q')}} = \lim_{r\to 0}\dfrac{(\mathscr{C}^{Q-Q'}\llcorner \Gamma )(B(x,r))}{(2r)^{(Q-Q')}} = 1.
\]
So that the ball $B(0,1)$ realizes the supremum of the right-hand-side of \eqref{eq:rectifdensity1}. Therefore the restriction of the unit ball $B(0,1)\cap \mathbb{V}(x)$ is isodiametric in the group $\mathbb{V}(x)$.
\end{proof}

\section{The isodiametric problem in homogeneous groups}\label{sec:isodiametric}

Consider a homogeneous Lie group $\mathbb{G}$, endowed with the ''volume measure'' $\mathscr{H}^{Q}$ and a homogeneous norm $||\cdot||$. A set $E\subset \mathbb{G}$ is \textbf{isodiametric} if it maximizes the volume among sets with diameter at most two. Being isodiametric depends on the distance, but clearly not on the choice of the Haar measure. The goal of this section is not to find isodiametric sets in general, but rather to ask whether the unit ball is isodiametric or not. The reason for this is the following consequence of \ref{prop:federer-density} (or Lemma 3.1 in \cite{JNGV22coarea}) and is proved in the same way as Corollary~\ref{cor:isodiam}:
\begin{proposizione}
  If $Q$ is the homogeneous dimension of $\mathbb{G}$, then the following are equivalent:
  \begin{enumerate}[label=(\roman*)]
    \item $\Theta^Q(\mathscr{H}^Q,x) = 1$ for all $x\in \mathbb{G}$,
    \item The unit ball in $\mathbb{G}$ is isodiametric.
  \end{enumerate}
\end{proposizione}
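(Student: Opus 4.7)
The plan is to reduce both conditions (i) and (ii) to the single identity $\mathscr{H}^Q(B(0,1)) = 2^Q$, using two scale/translation invariance arguments together with the second, ``$Q$-dimensional'', part of Proposition \ref{prop:federer-density}.

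First, I would record that by left-invariance and homogeneity of both $d$ and $\mathscr{H}^Q$ one has $\mathscr{H}^Q(B(x,r)) = r^Q \mathscr{H}^Q(B(0,1))$ for every $x\in\mathbb{G}$ and $r>0$. In particular the density $\Theta^Q(\mathscr{H}^Q,x)$ is well-defined, constant in $x$, and equals $\mathscr{H}^Q(B(0,1))/2^Q$. So condition (i) is equivalent to the numerical identity $\mathscr{H}^Q(B(0,1)) = 2^Q$.

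Next I would show that the quantity
\[
M := \sup \bigl\{ \mathscr{H}^Q(S)/(\diam S)^Q : 0<\diam S < +\infty \bigr\}
\]
equals $1$. This uses the key scaling remark: if $x\in S$ and $\lambda>0$, then $S_\lambda := x * \delta_\lambda(x^{-1}*S)$ contains $x$, has diameter $\lambda\diam S$, and $\mathscr{H}^Q$-measure $\lambda^Q\mathscr{H}^Q(S)$; combined with left-invariance this shows that for every $x\in \mathbb{G}$ and every $\delta>0$,
\[
\sup\bigl\{\mathscr{H}^Q(S)/(\diam S)^Q : x\in S,\ \diam S<\delta\bigr\} = M.
\]
Applying the second part of Proposition \ref{prop:federer-density} to a set $E$ with $0<\mathscr{H}^Q(E)<\infty$ (for instance $E=B(0,1)$) at a single point $x$ where the conclusion holds then yields $M=1$.

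Finally I would translate the isodiametric condition (ii) into the same identity. By definition, (ii) says $\mathscr{H}^Q(B(0,1)) = \sup\{\mathscr{H}^Q(S) : \diam S \le 2\}$. Rescaling every admissible $S$ to have diameter exactly $2$ (as in the previous step) shows that this supremum equals $2^Q M = 2^Q$. Thus (ii) is equivalent to $\mathscr{H}^Q(B(0,1)) = 2^Q$, i.e.\ to (i), which closes the proof. The only non-trivial input is the universality of the bound $M\le 1$, which is exactly the content of the $Q$-dimensional part of Proposition \ref{prop:federer-density}; everything else is a matter of exploiting homogeneity and left-invariance.
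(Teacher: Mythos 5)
Your proof is correct and follows essentially the same route the paper intends: the paper gives no written proof but states that the proposition ``is proved in the same way as Corollary~\ref{cor:isodiam}'', i.e.\ by observing that the supremum $\sup\{\mathscr{H}^Q(S)/(\diam S)^Q\}$ equals $1$ (the second part of Proposition~\ref{prop:federer-density}, made scale- and translation-independent by homogeneity) and that the unit ball is isodiametric precisely when it realizes this supremum, which is exactly your reduction of both (i) and (ii) to $\mathscr{H}^Q(B(0,1))=2^Q$. No gaps; the only implicit ingredient is that $0<\mathscr{H}^Q(B(0,1))<\infty$, which holds since $\mathscr{H}^Q$ is a Haar measure on $\mathbb{G}$.
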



If $\mathbb{G}$ is a finite dimensional vector space (i.e. a homogeneous group of step $1$), the unit ball is isodiametric, independently of the norm. There is a simple proof of this fact using the Brunn-Minkowski inequality: If a set $E\subset \mathbb{R}^n$ has diameter two for some norm on $\mathbb{R}^n$, there holds $E+(-E)\subset B(0,2)$, and as $\mathscr{H}^n(E)=\mathscr{H}^n(-E)$, by the Brunn-Minkowski inequality
\[
     2^n\mathscr{H}^n(E) \le \mathscr{H}^n(E+(-E)) \le \mathscr{H}^n(B(0,2)) = 2^n \mathscr{H}^n(B(0,1))
\]
And equality holds if and only if $E$ is a translate of the unit ball. There is also a Brunn-Minkowski inequality in homogeneous groups (see for instance \cite{Pozuelo2019BrunnMinkowski}), but a similar proof fails when the homogeneous dimension of the group is larger than its topological dimension, as the Brunn-Minkowski inequality in groups involves the topological dimension. 
Furthermore, in homogeneous groups of step $2$ or above, there always exists a distance for which the unit ball is not isodiametric (see \cite{Rigot2011Isodiametric}), for instance the smooth-box distance which will be defined in Section~\ref{sec:proof}. It seems that it should be the same for any distance, but we are not able to prove this at the moment. 

\medskip

Let us now focus on the simplest case for this problem: the parabolic plane. It is the homogeneous group $\R^2$, with the usual law $+$ (which is commutative) and dilations $\delta_\lambda:(x,t)\mapsto (\lambda x,\lambda^2).$ 
Fix a homogeneous distance $d$ on $\mathbb{W}$ and let $B_d$ be the associated ball. Without loss of generality, one can suppose that
\[ 
       B_d\cap \{(x,0), x\in \R\} = [\,-1,1\,] \times \{0\} \text{ and }  B_d\cap \{(0,t), t\in \R\} =  \{0\}\times [\,-1,1\,],
\]
therefore, for $x\in \R$ there holds $d((0,0),(x,0)) = |x|$ and for $t\in \R$, $d((0,0),(0,t)) = |t|^{1/2}$. Here are five examples of such distances:
\begin{enumerate}
  \item $||(x,t)||_\infty:= \max\{|x|,|t|^{1/2}\},$ (Smooth-box distance) 
  \item $||(x,t)||_4 := (|x|^4  + |t|^2)^{1/4}\},$ (''Koranyi'' distance)  
  \item $||\cdot||_{CC}$, (the restriction of the Carnot-Carathéodory distance in $\mathbb{H}^1$ after rescaling, see Chapter 3 of \cite{Metelichenko-Thesis}),
  \item $||(x,t)||_1 := |x|+ |t|^{1/2}$,
  \item $||(x,t)||_?:= \max\{ |x|, |t|^{1/2} - \text{sign}(t) x\}$.
\end{enumerate}
\begin{proposizione}
  If $||\cdot||$ is one of the above functions, then it is a homogeneous norm on the parabolic plane, and if $B$ is the associated unit ball, then it is not an isodiametric set for $||\cdot||$. 
\end{proposizione}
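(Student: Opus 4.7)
My plan is, for each of the five norms, to exhibit an explicit compact set $E \subset \R^2$ with $\diam_{||\cdot||} E \le 2$ and area strictly larger than the unit ball $B$. The five cases split into three types.

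\textbf{Rectangle or parallelogram.} For the smooth-box norm (1), the rectangle $E := [-1,1] \times [-1,3]$ has area $8 > 4$ and diameter $2$ since $|\Delta x| \le 2$ and $|\Delta t|^{1/2} \le 2$. For the asymmetric norm $||\cdot||_?$ (5), take the parallelogram $E := \{(x,t) : |x| \le 1,\ 4x - 2 \le t \le 4x + 2\}$, of area $8 > 16/3$: the tilt $t \approx 4x$ absorbs the sign-dependence of the norm, and the diameter bound reduces, after parameterizing $t_i = 4 x_i + 2 s_i$ with $s_i \in [-1,1]$, to $4\Delta x + 2(s_1 - s_2) \le (2 + \Delta x)^2$ and its sign-reversed analogue, both immediate after expanding.

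\textbf{Add-a-bump.} For the Koranyi norm (2), the point $z := (0, 2)$ lies outside $B_4 = \{y^4 + \tau^2 \le 1\}$ and satisfies $\rho(z) := \sup_{p \in B_4} ||z - p||_4 = \sqrt{3} < 2$, the supremum being attained at $(0, -1)$ by a one-variable optimization that shows $(y^4 + (2-\tau)^2)^{1/4}$ is monotone along the boundary. By compactness of $B_4$ and continuity of the norm, some small closed Euclidean ball $\overline U(z, \eta)$ satisfies $\diam(B_4 \cup \overline U(z, \eta)) \le 2$, and the area strictly increases since $z \notin B_4$. For the Carnot--Carath\'eodory norm (3), an analogous witness $z$ is extracted from the explicit description of the CC unit ball in Chapter~3 of \cite{Metelichenko-Thesis}.

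\textbf{Shrunk support.} The most delicate case is the norm $||(x,t)||_1 = |x| + \sqrt{|t|}$ (4), for which every $z \notin B_1$ has $\rho(z) \ge 2$, so the add-a-bump strategy fails: $B_1$ is saturated by diameter-realizing pairs among its extreme points. Instead, take $E := \{(x,t) : |x| \le \tfrac{2}{3},\ |t| \le \tfrac{16}{9}(1 - \tfrac{3}{2}|x|)^2\}$, of area $\tfrac{128}{81} > \tfrac{4}{3} = \mathcal{L}^2(B_1)$. With $T(x) := \tfrac{16}{9}(1 - \tfrac{3}{2}|x|)^2$, the diameter bound reduces to the cross-slice inequality $T(x_1) + T(x_2) \le (2 - |x_1 - x_2|)^2$; after the substitution $u := 1 - \tfrac{3}{2}|x_1|,\ v := 1 - \tfrac{3}{2}|x_2|$ this becomes the elementary polynomial inequality $4(u^2 + v^2) \le (1 + u + v)^2$ on $[0, 1]^2$ (setting $s = u+v$, $d = u - v$ reduces it further to $2d^2 \le 2 - (s-1)^2$, which holds on $|d| \le \min(s, 2-s)$). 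Combined with $|\Delta x| + \sqrt{|\Delta t|} \le |\Delta x| + (2 - |\Delta x|) = 2$, this yields $\diam_{||\cdot||_1} E \le 2$. The main obstacle is exactly this last case: one must realize that $\pi_1(E)$ should be shrunk strictly below $[-1, 1]$ in order to free vertical room for taller $t$-slices, and the correct radius $\tfrac{2}{3}$ only emerges from the one-dimensional optimization $a \mapsto \tfrac{4}{3} a (2-a)^2$ on $[2 - \sqrt{2}, 1]$, which is not suggested by any local perturbation of $B_1$.
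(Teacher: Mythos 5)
Your strategy --- exhibiting, for each of the five norms separately, an explicit compact competitor of diameter at most $2$ and strictly larger area --- is genuinely different from the paper's. The paper uses a single uniform device, learned from Metelichenko's thesis: inside each unit ball $B$ one finds a centrally symmetric, \emph{Euclidean-convex} set $C$ with $\mathscr{H}^Q(C)>\tfrac12\mathscr{H}^Q(B)$; since $\tfrac12 C+(-\tfrac12 C)\subset C\subset B$ the Euclidean contraction $\tfrac12 C$ has homogeneous diameter at most $1$, hence $\delta_2(\tfrac12 C)$ has diameter at most $2$ while its volume is $8\cdot\tfrac14\,\mathscr{H}^Q(C)=2\,\mathscr{H}^Q(C)>\mathscr{H}^Q(B)$. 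That argument disposes of all five norms at once, with no case analysis beyond the half-volume check. Where you do construct competitors, they are correct and I could verify the computations: the rectangle $[-1,1]\times[-1,3]$ for $||\cdot||_\infty$; the tilted parallelogram for $||\cdot||_?$, whose diameter bound indeed reduces to $2(s_1-s_2)\le 4+(\Delta x)^2$ in both sign cases; the bump at $z=(0,2)$ for the Koranyi norm, where on $B_4$ one has $y^4+(2-\tau)^2\le(1-\tau^2)+(2-\tau)^2=5-4\tau\le 9$, so $\rho(z)=9^{1/4}=\sqrt3<2$; and the shrunk-support set for $||\cdot||_1$, whose cross-slice inequality $4(u^2+v^2)\le(1+u+v)^2$ does hold on $\{|u-v|\le\min(u+v,\,2-u-v)\}$, with equality exactly at the pair $(0,\pm 16/9)$, $(\pm 2/3,0)$ realizing diameter $2$. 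Your route buys explicit shapes and quantitative volume gains (e.g.\ $128/81$ versus $4/3$), at the price of length; the paper's route buys uniformity, and in fact produces a better competitor for $||\cdot||_1$ (a dilated tent of volume $64/27$).

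The one genuine gap is the Carnot--Carath\'eodory norm, case (3). You assert that \say{an analogous witness $z$ is extracted from the explicit description of the CC unit ball} but never exhibit $z$ nor verify $\sup_{p\in B_{CC}}||z^{-1}*p||_{CC}<2$. Unlike the Koranyi case, where the optimization is a two-line algebraic estimate, the boundary of the CC ball is described by transcendental (trigonometric) parametrizations, so this verification is not routine and cannot be waved away by analogy; as written, one of the five cases is unproved. The remark preceding the paper's proof does indicate that the ball can be enlarged for the CC norm, so your approach should not \emph{fail} there, but the witness and the estimate must actually be produced --- or you can close the case (indeed all five at once) by falling back on the symmetric-convex-subset argument above.
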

\begin{proof}
  The proof that $||\cdot||$ is a homogeneous norm is left to the reader. To check that the unit ball is not isodiametric, the usual technique is to find a set with larger volume, but the same diameter. This works only for $||\cdot||_\infty$, $||\cdot||_4$ and $||\cdot||_{CC}$. Indeed, for the last two norms, given any point on the boundary of the unit ball, there is another point at distance $2$ from it, so the ball cannot be enlarged as such. Still, it is not hard to find a better candidate for the isodiametric problem by hand. However there is a procedure which works for all five norms, which we learned in Metelichenko's PhD thesis \cite[Theorem~3.34]{Metelichenko-Thesis}:

  \medskip

  It is easy to check that $B$ contains a symmetric euclidean convex set $C$ with $\mathscr{H}^Q(C)>\mathscr{H}^Q(B)/2$. Denote by $\frac{1}{2} C$ the {\textit euclidean} contraction of $C$ by a factor $1/2$. Notice that $\diam \frac{1}{2} C \leq 1$, as $ \frac{1}{2}C + (-\frac{1}{2}C) \subset C$, by euclidean convexity and symmetry, so that $\diam \delta_2 (\frac{1}{2} C) \leq 2$. On the other hand
\[
  \mathscr{H}^Q(\delta_2 (\textstyle{\frac{1}{2}} C) ) = 2 \mathscr{H}^Q (C) > \mathscr{H}^Q (B).
\]
Thus $\delta_2 (\frac{1}{2} C)$ has a better isodiametric ratio than $B$ and $B$ is not isodiametric.
\end{proof}

\section{Sets with Hausdorff density $1$ in homogeneous groups}\label{sec:proof}

%
One can endow a homogeneous group $\mathbb{G}$ as above with a smooth-box homogeneous norm of the form
\begin{equation}\label{eq:smoothbox}
    ||(x_1,\dots,x_\kappa)|| := \max \{ \epsilon_\ell |x_\ell|^{1/\ell},\ \ell= 1,\dots,\kappa\},
\end{equation}
where $|x_\ell|$ stands for the euclidean norm of the vector $x_\ell\in V_\ell$ and where $\epsilon_1=1$.
For a proof of the following lemma we refer to the Appendix of \cite{step2}. Throughout the rest of this section, we will denote by $B(x,r)$ the ball of centre $x$ and radius $r>0$ relative to the metric induced by the norm in \ref{eq:smoothbox}.

\begin{lemma}
  Provided the $\epsilon_\ell$ are small enough, the function \eqref{eq:smoothbox} defines a homogenous norm, and thus a metric, on $\mathbb{G}$. More precisely $\epsilon_\ell$ has only to be chosen small enough with respect to $\epsilon_1,\dots,\epsilon_{\ell-1}$.
\end{lemma}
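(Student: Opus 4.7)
The plan is to verify the four defining properties of a homogeneous norm---positivity, positive definiteness, homogeneity, and subadditivity---so that $d(x,y):=||x^{-1}*y||$ then yields a left-invariant homogeneous distance. Positivity and positive definiteness are immediate from the fact that $||\cdot||$ is a maximum of (weighted) Euclidean norms, while homogeneity follows directly from $|(\delta_\lambda x)_\ell|^{1/\ell}=\lambda|x_\ell|^{1/\ell}$. For the symmetry $||x^{-1}||=||x||$ (needed to get $d(x,y)=d(y,x)$), I would use that in the exponential-type coordinates adopted in Section \ref{sec:preliminaries}---where the BCH polynomials satisfy $Q_\ell(x,x)=0$---one has $x^{-1}=-x$, so the layer-wise Euclidean norms are preserved under inversion.

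The heart of the argument is the subadditivity $||x*y||\le ||x||+||y||$, which I would prove layer by layer using $(x*y)_\ell=x_\ell+y_\ell+Q_\ell(x,y)$. Two structural facts recalled in Section \ref{sec:preliminaries} are crucial: first, $Q_\ell$ is a polynomial in $x_1,\dots,x_{\ell-1},y_1,\dots,y_{\ell-1}$ which is $\delta_\lambda$-homogeneous of weight $\ell$; second, since $Q_\ell(x,0)=Q_\ell(0,y)=0$, every monomial of $Q_\ell$ must involve at least one $x$-factor and at least one $y$-factor (``mixed monomials''). Setting $r:=||x||$ and $s:=||y||$, so that $|x_i|\le (r/\epsilon_i)^i$ and $|y_j|\le (s/\epsilon_j)^j$ for $i,j<\ell$, each such mixed monomial is controlled by a constant depending only on $\epsilon_1,\dots,\epsilon_{\ell-1}$ times $r^a s^b$ with $a,b\ge 1$ and $a+b=\ell$. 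Summing the finitely many monomials yields
\[
|Q_\ell(x,y)|\le C_\ell\bigl((r+s)^\ell-r^\ell-s^\ell\bigr),
\]
where $C_\ell$ depends only on the group and on $\epsilon_1,\dots,\epsilon_{\ell-1}$.

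Combining this with $|x_\ell|+|y_\ell|\le (r^\ell+s^\ell)/\epsilon_\ell^\ell$, the desired subadditivity at layer $\ell$, namely $\epsilon_\ell|(x*y)_\ell|^{1/\ell}\le r+s$, reduces to
\[
\frac{r^\ell+s^\ell}{\epsilon_\ell^\ell}+C_\ell\bigl((r+s)^\ell-r^\ell-s^\ell\bigr)\le \frac{(r+s)^\ell}{\epsilon_\ell^\ell},
\]
which is equivalent to $C_\ell\,\epsilon_\ell^\ell\le 1$. I would then conclude by induction on $\ell$: starting from $\epsilon_1=1$ (where $Q_1\equiv 0$ makes the estimate trivial), and having already fixed $\epsilon_1,\dots,\epsilon_{\ell-1}$, choose $\epsilon_\ell\le C_\ell^{-1/\ell}$. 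This gives exactly the quantitative statement of the lemma, that $\epsilon_\ell$ must be small only relative to $\epsilon_1,\dots,\epsilon_{\ell-1}$.

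The main obstacle is not the arithmetic but the bookkeeping for the ``mixed monomial'' property: a pure-$x$ or pure-$y$ monomial of weight $\ell$ in $Q_\ell$ would contribute a term of order $r^\ell$ or $s^\ell$ that could not be absorbed, since the slack $(r+s)^\ell-r^\ell-s^\ell$ in the target inequality degenerates as $r$ or $s$ vanishes. The vanishing identities $Q_\ell(x,0)=Q_\ell(0,y)=0$ are precisely what rules this out and lets the induction close.
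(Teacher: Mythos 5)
Your argument is correct and follows essentially the same route as the proof the paper relies on: the lemma is not proved in the text but deferred to the Appendix of \cite{step2}, where the same layer-by-layer induction is carried out, and your bound on the mixed monomials of $Q_\ell$ (each containing at least one $x$- and one $y$-factor because $Q_\ell(x,0)=Q_\ell(0,y)=0$, hence controlled by $r^as^b$ with $a,b\ge 1$, $a+b=\ell$) is precisely the estimate the paper itself records later as \eqref{Qestimate} in the proof of Proposition \ref{prop:Gset}. The reduction of subadditivity at layer $\ell$ to $C_\ell\,\epsilon_\ell^\ell\le 1$, with $C_\ell$ depending only on $\epsilon_1,\dots,\epsilon_{\ell-1}$, is exactly the quantitative content of the lemma, so nothing is missing.
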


The unit ball for this homogeneous norm is not isodiametric, indeed it is easy to find a strictly larger set diameter $2$.

\begin{proposizione}\label{prop:Gset}
Let $19/10<\xi<2$. Provided the $\epsilon_\ell$ are chosen small enough, the following set
  \[
    G(0,1) := \{ (x_1,\dots,x_\kappa),\  |x_1| < 1 \text{ and for } l=2,\dots, \kappa,\quad \epsilon_\ell^\ell |x_\ell| < \xi^{\ell-1}
    \},
  \]
  contains the unit ball and has diameter $2$.
\end{proposizione}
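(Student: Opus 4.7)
I would verify the two claims separately: the containment $B(0,1)\subseteq G(0,1)$ (up to boundary) and the upper bound $\mathrm{diam}\,G(0,1)\le 2$. If $\|x\|\le 1$, then $\epsilon_\ell|x_\ell|^{1/\ell}\le 1$ for every $\ell$, hence $\epsilon_\ell^{\ell}|x_\ell|\le 1<\xi^{\ell-1}$ for $\ell\ge 2$ (since $\xi>1$), and $|x_1|\le 1$. The boundary case $|x_1|=1$ is harmless because replacing $G(0,1)$ by its closure does not change the diameter, so one obtains $B(0,1)\subseteq\overline{G(0,1)}$. Since the ball $B(0,1)$ already has diameter exactly $2$ (its projection to $V_1$ contains $\pm v$ for any $v$ with $|v|=1$ and $\pi_1$ is $1$-Lipschitz), the lower bound $\mathrm{diam}\,G(0,1)\ge 2$ is automatic, so only the upper bound needs work.

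For the upper bound, I would proceed by induction on $\ell$. For $x,y\in G(0,1)$, using the group law together with $x*x^{-1}=0$, one writes
\[
(x^{-1}*y)_\ell = y_\ell - x_\ell + \Phi_\ell(x_1,\dots,x_{\ell-1},y_1,\dots,y_{\ell-1}),
\]
where $\Phi_\ell$ is a universal $V_\ell$-valued polynomial, $\ell$-homogeneous with respect to the dilations $\delta_\lambda$ and depending only on the first $\ell-1$ layers of $x$ and $y$. Because each of its arguments has weighted degree strictly less than $\ell$, every monomial of $\Phi_\ell$ is a product of at least two factors. Using the bounds $|x_j|,|y_j|<\xi^{j-1}/\epsilon_j^{j}$ for $j<\ell$ coming from $x,y\in G(0,1)$, this yields a uniform estimate $|\Phi_\ell(x,y)|\le C_\ell$, where $C_\ell$ depends polynomially on $\xi$ and on the already-chosen $\epsilon_1,\dots,\epsilon_{\ell-1}$, but not on $\epsilon_\ell$. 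Combining this with $|x_\ell|+|y_\ell|<2\xi^{\ell-1}/\epsilon_\ell^{\ell}$, the desired inequality $\epsilon_\ell|(x^{-1}*y)_\ell|^{1/\ell}\le 2$ reduces to
\[
\epsilon_\ell^{\ell}\, C_\ell \;\le\; 2^\ell - 2\xi^{\ell-1},
\]
which holds whenever $\epsilon_\ell$ is taken small enough, since the right-hand side is strictly positive thanks to $\xi<2$. The base case $\ell=1$ is trivial: $(x^{-1}*y)_1=y_1-x_1$ has absolute value less than $2$ from $|x_1|,|y_1|<1$ and $\epsilon_1=1$.

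\textbf{Main obstacle.} The delicate point is that $C_\ell$ is a polynomial in $1/\epsilon_1,\dots,1/\epsilon_{\ell-1}$, which are frozen when one gets to choose $\epsilon_\ell$; so $C_\ell$ can be large. What makes the induction work is precisely the strict inequality $\xi<2$, which creates the positive gap $2^\ell-2\xi^{\ell-1}$ into which $\epsilon_\ell^{\ell}C_\ell$ can be fitted by shrinking $\epsilon_\ell$ (this is compatible with the smallness conditions on $\epsilon_\ell$ already imposed to make $\|\cdot\|$ a norm). The other hypothesis $\xi>1$ is exactly what gives the containment $B(0,1)\subseteq \overline{G(0,1)}$. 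The stricter bound $\xi>19/10$ appearing in the statement plays no role in this proof and must be needed elsewhere in the paper (presumably for the bi-Lipschitz construction used to derive Theorem~\ref{thm:main}).
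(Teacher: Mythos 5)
Your proof is correct and follows essentially the same route as the paper's: a layer-by-layer check of the containment, then for the diameter the Baker--Campbell--Hausdorff expansion $(x^{-1}*y)_\ell = y_\ell - x_\ell + Q_\ell(\cdot)$, a bound on the polynomial term by a constant independent of $\epsilon_\ell$ (the paper obtains it from homogeneity and $G(0,1)\subset B(0,2)$ rather than from your componentwise bounds, which is equivalent), and the absorption of that term into the gap $2^\ell - 2\xi^{\ell-1}>0$ guaranteed by $\xi<2$ upon shrinking $\epsilon_\ell$. Your side remarks are also accurate: the paper glosses over the strict-inequality versus closed-ball boundary issue that you patch by passing to the closure, and indeed only $1<\xi<2$ is used in this proposition, the stronger bound $\xi>19/10$ being needed later, in Remark~\ref{rem:eps-small} and Proposition~\ref{prop:ballradii}.
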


\begin{definizione}\label{def:ball}
For any $x\in\mathbb{G}$ and any $r>0$ we define $G(x,r):=x*\delta_r(G(0,1))$.
\end{definizione}

\begin{proof}[Proof of Proposition \ref{prop:Gset}]
Checking layer by layer, one sees that $B(0,1)\subset G(0,1)\subset B(0,2)$.
   It remains to prove that if $x,y$ are points in $G(0,1)$ there holds $||x^{-1}*y|| \le 2$. This is also done layer by layer: for the first layer, one immediately has $|y_1-x_1| \le 2$. Let $\ell\ge 2$.
   
   \medskip
   
  Recall that each entry of $Q_\ell:(V_1\oplus\ldots\oplus V_{\ell-1})\times(V_1\oplus\ldots\oplus V_{\ell-1})\to V_\ell$ is a polynomial in the components of $x$ and $y$, which is $\ell$ homogeneous with respect to dilations. Hence, following the footsteps of  \cite{step2}, we can find positive real numbers $c_{\ell,j}>0$ such that
\begin{equation}
    \begin{split}
    \label{Qestimate}
          |Q_\ell (x,y)| \le& \sum_{j=1}^{\ell-1} c_{j,\ell} ||x||^j||y||^{\ell-j}\leq \tilde{c}_\ell\sum_{j=1}^{\ell-1} \binom{\ell}{j} ||x||^j||y||^{\ell-j}\\
          \leq& \tilde{c}_\ell(||x||+||y||)^\ell\leq 2^{\ell-1}\tilde{c}_\ell(||x||^\ell+||y||^\ell),
    \end{split}
\end{equation}
  where the last inequality above comes from Jensen's inequality.
This implies, that if we let $c_\ell:=2^{\ell-1}\tilde{c}_\ell$, we can write
   \begin{equation*}
     \epsilon_\ell^\ell |(x^{-1}*y)_\ell|\le  \epsilon_\ell^\ell |y_\ell-x_\ell + Q_\ell(y,x^{-1})|\le \epsilon_\ell^\ell(|y_\ell|+|x_\ell| + c_\ell (||x||^\ell + ||y||^\ell)).
   \end{equation*}
   Using the fact that $x$ and $y$ are in $G(0,1)$ one gets 
   \[
     \epsilon_\ell^\ell |(x^{-1}*y)_\ell|\le  2\xi^{\ell-1} + \epsilon_\ell^\ell c_\ell (||x||^\ell + ||y||^\ell).
   \]
   As $G(0,1)\subset B(0,2)$, provided $\epsilon_\ell$ is small enough, the right hand side above is always bounded from above by $2^\ell$. Taking the $\ell$-th root concludes the proof of the claim.
 \end{proof}

 \begin{osservazione}\label{rem:eps-small}
\textbf{In the following, $\epsilon_\ell$ will be assumed to be small enough so that $4^\ell\epsilon_\ell^\ell c_\ell <(2-\xi)^{\kappa}$.}
 \end{osservazione}
 
\begin{proposizione}\label{prop:funz}
Let $0<\epsilon<1$. Assume $E$ is a set in $\mathbb{G}$ such that
\begin{equation}
    E\cap y*\{x\in\mathbb{G}:\lVert x\rVert>(1-\epsilon)^{-1} \lvert x_1\rvert\}=\emptyset \qquad\text{for any }y\in E.
    \label{parab}
\end{equation}
Then, there exists a continuous function $\varphi:\pi_1(E)\to V_2\oplus \ldots\oplus V_\kappa$ such that $\varphi(\pi_1(E))=E$ and in particular for any $w,z\in \pi_1(E)$ we have
\begin{equation}
\lVert\varphi(z)^{-1}\varphi(w)\rVert\leq (1-\epsilon)^{-1} \lvert z-w\rvert.
\label{eq:num:num:3}
\end{equation}
\end{proposizione}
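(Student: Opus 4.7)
The plan is to show that hypothesis \eqref{parab} forces $\pi_1$ to be injective on $E$, and then to define $\varphi$ as (essentially) the inverse of this projection, so that $E$ is the graph of $\varphi$ over $\pi_1(E)$. The estimate \eqref{eq:num:num:3} will turn out to be nothing more than a rewriting of \eqref{parab}.

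First, I would reformulate \eqref{parab}: for any $y,z\in E$, setting $x:=y^{-1}*z$, one has $\|x\|\le (1-\epsilon)^{-1}|x_1|$. Because $Q_1=0$, the projection $\pi_1$ is a group homomorphism, so $x_1=\pi_1(z)-\pi_1(y)$, and hence
\[
d(y,z)=\|y^{-1}*z\|\le (1-\epsilon)^{-1}|\pi_1(z)-\pi_1(y)|.
\]
In particular, if $\pi_1(y)=\pi_1(z)$ then $d(y,z)=0$ and $y=z$, so $\pi_1|_E$ is injective. Let $\psi:\pi_1(E)\to E\subseteq\mathbb{G}$ be the inverse of $\pi_1|_E$ and set $\varphi(z):=\pi_{\ge 2}(\psi(z))\in V_2\oplus\cdots\oplus V_\kappa$, where $\pi_{\ge 2}$ is the projection onto the last $\kappa-1$ layers. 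Then $E=\{(z,\varphi(z)):z\in\pi_1(E)\}$, which is what $\varphi(\pi_1(E))=E$ means with the usual identification of $\varphi$ with its graph. Applying the reformulated hypothesis to $\psi(w)$ and $\psi(z)$ yields exactly \eqref{eq:num:num:3}.

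For continuity of $\varphi$ as a map into $V_2\oplus\cdots\oplus V_\kappa$ with the euclidean topology, I would invoke the smooth-box shape of the norm \eqref{eq:smoothbox}: for each $\ell\ge 2$ we have $|x_\ell|\le \epsilon_\ell^{-\ell}\|x\|^\ell$. Given $w_n\to z$ in $\pi_1(E)$, estimate \eqref{eq:num:num:3} gives $\|\varphi(w_n)^{-1}*\varphi(z)\|\to 0$, hence each euclidean component of $\varphi(w_n)^{-1}*\varphi(z)$ tends to zero. Since inversion and group multiplication are polynomial in the exponential coordinates, this implies $\varphi(w_n)\to\varphi(z)$ componentwise.

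I do not foresee any real obstacle; the proposition is essentially a repackaging of \eqref{parab} together with the fact that $Q_1=0$ makes $\pi_1$ a group morphism. The only point that needs a word of care is the passage from the control of $\|\varphi(w_n)^{-1}*\varphi(z)\|$ back to euclidean convergence of $\varphi(w_n)$, which is handled by the layer-by-layer bound coming from the smooth-box form of the distance.
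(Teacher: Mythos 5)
Your proof is correct and follows essentially the same route as the paper's: use \eqref{parab} to show $\pi_1$ is injective on $E$, define $\varphi$ by inverting the projection, and observe that both \eqref{eq:num:num:3} and continuity are direct restatements of \eqref{parab}. The only difference is cosmetic --- you make explicit (via the homomorphism property $Q_1=0$ and the layer-by-layer bound $|x_\ell|\le \epsilon_\ell^{-\ell}\|x\|^\ell$) the continuity step that the paper dispatches with ``come directly from \eqref{parab}''.
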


\begin{proof}
First of all, we note that the projection $\pi_1$ is injective on $E$. To see this, let $y\in E$ and note that thanks to \eqref{parab} we have
\begin{equation}
    \begin{split}
        &\{(y_1,w_2,\ldots,w_\kappa)\in\mathbb{G}:\text{there exists an }2\leq \ell\leq \kappa \text{ such that }w_\ell\neq y_\ell\}\\
        &\qquad\qquad\subseteq \{w\in\mathbb{G}:\lVert y^{-1}*w\rVert>(1-\epsilon)^{-1}\lvert w_1-y_1\rvert\}\\
        &\qquad\qquad\qquad=y*\{x\in\mathbb{G}:\lVert x\rVert>(1-\epsilon)^{-1} \lvert x_1\rvert\}.
    \end{split}
\end{equation}
Since $\pi_1^{-1}(y_1)=(y_1,0)+V_2\oplus\ldots\oplus V_\kappa$, this concludes the proof of the injectivity of $\pi_1$ on $E$. This shows that there exists a function $\varphi:\pi_1(E)\to \mathbb{G}$ such that $\varphi(\pi_1(E))=E$. The continuity of $\varphi$ and \eqref{eq:num:num:3} come directly from \eqref{parab}. 
\end{proof}

Before passing to the proof of our main theorem, a technical result is needed:

\begin{proposizione}\label{prop:ballradii}
 There exists a continuous positive function $s:[0,1)\to (0,+\infty)$ such that if $x\in \mathbb{G}$ satisfies $|x_1|<||x||$, then there is $\lvert x_1\rvert<r<\lVert x\rVert$ such that $B(x,||x||s(|x_1|/||x||)))\subset G(0,r)\backslash B(0,r)$. Furthermore this function is bounded from above by $1-|x_1|/||x||$.
\end{proposizione}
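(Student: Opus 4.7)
The plan is to set $\rho := ||x||$ and $a := |x_1|/\rho \in [0,1)$, and then look for $r = \beta(a)\rho$ and $t = s(a)\rho$ where $s,\beta$ are continuous functions of $a$ alone. The inclusion $B(x,t) \subset G(0,r) \setminus B(0,r)$ will be guaranteed by a finite family of inequalities tying $\beta$ and $s$ together; the challenge is to arrange that these are simultaneously satisfiable with $s(a) > 0$ for every $a\in[0,1)$, while keeping $s(a) \le 1-a$.

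To derive the constraints, I fix $y = x*z$ with $||z|| \le t$. Since $\pi_1$ is $1$-Lipschitz, $|y_1| \le a\rho + t$, so $|y_1| < r$ is guaranteed as soon as $a+s<\beta$. For layer $\ell\ge 2$, the estimate \eqref{Qestimate} yields $|Q_\ell(x,z)| \le c_\ell(\rho^\ell+t^\ell)$, hence $\epsilon_\ell^\ell|y_\ell| \le (1+\epsilon_\ell^\ell c_\ell)(\rho^\ell+t^\ell)$, and the condition $\epsilon_\ell^\ell|y_\ell| < \xi^{\ell-1}r^\ell$ follows from
\[
(1+\epsilon_\ell^\ell c_\ell)(1+s^\ell)<\xi^{\ell-1}\beta^\ell.
\]
Finally, $||y|| \ge \rho - t$ by the reverse triangle inequality, so $y \notin B(0,r)$ whenever $\beta < 1 - s$.

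For the explicit choice, observe that at $s=0$ the inequality $(1+\epsilon_\ell^\ell c_\ell)<\xi^{\ell-1}$ holds for every $\ell \in \{2,\dots,\kappa\}$: indeed $\xi > 19/10$ while Remark \ref{rem:eps-small} forces $\epsilon_\ell^\ell c_\ell$ to be arbitrarily small. By continuity and finiteness of the layers, there is a constant $s_0 \in (0,1/3)$ such that
\[
(1+\epsilon_\ell^\ell c_\ell)(1+s^\ell) < \xi^{\ell-1}(1-2s)^\ell \quad \text{for all } \ell\in\{2,\dots,\kappa\},\ s \in [0,s_0].
\]
Setting $s(a) := \min\{s_0, (1-a)/3\}$ and $\beta(a) := 1-2s(a)$, the first-layer constraint $a + s(a) < \beta(a)$ reduces to $s(a) < (1-a)/3$, which holds by construction; the constraint $\beta(a) < 1 - s(a)$ reduces to $s(a)>0$, also clear; and the higher-layer constraints follow from the choice of $s_0$. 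Hence with $r := \beta(a)\rho$ one obtains $|x_1|<r<||x||$ together with $B(x,s(a)\rho) \subset G(0,r)\setminus B(0,r)$. Continuity of $s$ is immediate, and a brief case analysis gives $s(a) < 1-a$ on $[0,1)$.

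The only point requiring attention, which is not really an obstacle thanks to Remark \ref{rem:eps-small}, is that the error terms produced by the Baker--Campbell--Hausdorff polynomials must be absorbed by the margin $\xi^{\ell-1}$ appearing in the definition of $G(0,r)$. Using the reverse triangle inequality to rule out $B(0,r)$ is crude; one could sharpen $s(a)$ by exploiting, as in the proof of Proposition \ref{prop:Gset}, the specific layer $\ell_0\ge 2$ at which $||x||$ is achieved, but this refinement is unnecessary in what follows.
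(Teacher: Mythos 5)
Your argument follows the same skeleton as the paper's proof: reduce by homogeneity, then convert the inclusion $B(x,t)\subset G(0,r)\setminus B(0,r)$ into a finite list of layer-by-layer inequalities, absorbing the Baker--Campbell--Hausdorff terms via \eqref{Qestimate} and the smallness of the $\epsilon_\ell$ from Remark \ref{rem:eps-small}. Where you genuinely diverge is in the mechanism that keeps $B(x,t)$ outside $B(0,r)$. The paper singles out an index $\ell_0\ge 2$ with $\epsilon_{\ell_0}|x_{\ell_0}|^{1/\ell_0}=\lVert x\rVert$ (it exists precisely because $|x_1|<\lVert x\rVert$) and shows, again via \eqref{Qestimate}, that $\epsilon_{\ell_0}^{\ell_0}|y_{\ell_0}|$ cannot drop below $r^{\ell_0}$; this is the third condition in \eqref{eq:condizioni}. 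You instead invoke the reverse triangle inequality $\lVert x*z\rVert\ge \lVert x\rVert-\lVert z\rVert$, which is legitimate here since $\lVert\cdot\rVert$ induces a genuine left-invariant metric and $\lVert z^{-1}\rVert=\lVert z\rVert$; this removes $\ell_0$ from the argument altogether, and your explicit formula for $s$ makes continuity and the bound $s\le 1-a$ immediate, where the paper's continuity argument is left implicit. Both routes work; yours is shorter, while the paper's tracks more information and could produce a larger $s$ --- but, as you note, nothing downstream needs that.

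There is, however, one concrete error in your explicit choice. With $s(a)=\min\{s_0,(1-a)/3\}$ and $\beta(a)=1-2s(a)$, the first-layer constraint $a+s(a)<\beta(a)$ is \emph{equivalent} to the strict inequality $s(a)<(1-a)/3$, and this fails whenever $(1-a)/3\le s_0$, i.e.\ for every $a\in[1-3s_0,1)$: there $s(a)=(1-a)/3$ and $a+s(a)=\beta(a)$ exactly. This is not cosmetic, because $G(0,r)$ is open (its defining inequalities are strict) while $B(x,t)$ is closed: normalizing $\lVert x\rVert=1$ and taking $z=(z_1,0,\dots,0)$ with $z_1$ positively parallel to $x_1$ and $|z_1|=s(a)$, the point $y=x*z$ lies in $B(x,s(a))$ and has $|y_1|=a+s(a)=r$, so $y\notin G(0,r)$ and the inclusion you assert fails for your choice of $r$. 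The repair is immediate, in either of two ways: (i) keep your $s$ but choose $r$ strictly inside the open interval $\bigl(\max\{a+s(a),\,1-2s(a)\},\,1-s(a)\bigr)$, which is nonempty since $a+2s(a)<1$ and $s(a)>0$, and note that your higher-layer constraints hold for every $r\ge 1-2s(a)$ because their right-hand side is increasing in $r$; or (ii) keep $r=\bigl(1-2s(a)\bigr)\rho$ and shrink $s$ to $s(a):=\min\{s_0,(1-a)/4\}$, so that $s(a)<(1-a)/3$ holds strictly on all of $[0,1)$. With either correction your proof is complete.
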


\begin{proof}
  By homogeneity of the statement, it suffices to handle the case $||x||=1$.  Fix $x\in \mathbb{G}$ with $||x||=1$ and assume $|x_1|<1$. There exists an index $2\leq \ell_0\leq \kappa$ such that $\epsilon_{\ell_0} |x_{\ell_0}|^{1/{\ell_0}}=1$. 
  
  Fix an $s\in (0,1-|x_1|)$ and let $r \in(|x_1|+s,1)$. We want now to find conditions on $s$ and $r$ such that $B(x,s)\subset G(0,r)\backslash B(0,r)$. For any $y\in B(x,s)$ one has $|y_1| \le |x_1|+s < r$ and for every $\ell \in \{2,\dots,\kappa\}$ we have
  \begin{equation}
  \begin{split}
  \label{eq.estimates}
       s^\ell\geq& \epsilon_\ell^\ell\lvert (x^{-1}*y)_\ell\rvert=\epsilon_\ell^\ell\lvert-x_\ell+y_\ell+Q_\ell(-x_1,\ldots,-x_{\ell-1},y_1,\ldots,y_{\ell-1})\rvert\\
      \geq& \epsilon_\ell^\ell\lvert y_\ell\rvert-\epsilon_\ell^\ell\lvert x_\ell\rvert-\epsilon_\ell^\ell\lvert Q_\ell(-x_1,\ldots,-x_{\ell-1},y_1,\ldots,y_{\ell-1})\rvert\\
      \geq& \epsilon_\ell^\ell\lvert y_\ell\rvert-\epsilon_\ell^\ell\lvert x_\ell\rvert-\epsilon_\ell^\ell2^{\ell-1}\tilde{c}_\ell(||x||^\ell+||y||^\ell),
  \end{split}
  \end{equation}
  where in the last inequality we used \eqref{Qestimate}. This implies in particular that
  \[
    \epsilon_\ell^\ell|y_l| \le \epsilon_\ell^\ell |x_l| + s^\ell +\epsilon_\ell^\ell2^{\ell-1}\tilde{c}_\ell(||x||^\ell+||y||^\ell) \leq \epsilon_\ell^\ell |x_l|+s^\ell+4^\ell\epsilon_\ell^\ell\tilde{c}_\ell\leq 1+s^\ell+4^\ell\epsilon_\ell^\ell\tilde{c}_\ell.
  \]
  There also holds, thanks to \eqref{eq.estimates}, that
  \begin{equation}
      \begin{split}
        \epsilon_{\ell_0}^{\ell_0}|y_{\ell_0}| \ge& \epsilon_{\ell_0}^{\ell_0}|x_{\ell_0}|-s^{\ell_0}- 2^{\ell_0-1} \tilde{c}_{\ell_0} \epsilon_{\ell_0}^{\ell_0} (||x||^{\ell_0}+\lVert y\rVert^{\ell_0})\\
        \ge& (1-2^{\ell_0} \tilde{c}_{\ell_0} \epsilon_{\ell_0}^{\ell_0}) - s^{\ell_0}(1+2^{\ell_0} \tilde{c}_{\ell_0} \epsilon_{\ell_0}^{\ell_0}).  
      \end{split}
  \end{equation}
     The above computations shows that a sufficient condition to get the inclusion $B(x,s) \subset G(0,r)\backslash B(0,r)$ is
     \begin{equation}
         \begin{cases}
         |x_1|+s < r <1,\\
          1+s^\ell+4^\ell\epsilon_\ell^\ell\tilde{c}_\ell \le \xi^{\ell-1}r^\ell \text{ for } \ell =2,\dots,\kappa,\\
         (1-2^{\ell_0} \tilde{c}_{\ell_0} \epsilon_{\ell_0}^{\ell_0}) - s^{\ell_0}(1+2^{\ell_0} \tilde{c}_{\ell_0} \epsilon_{\ell_0}^{\ell_0}) >r^{\ell_0}.
         \label{eq:condizioni}
       \end{cases}
     \end{equation}

     By Remark \ref{rem:eps-small}, for each $\ell$, there holds  $1+ 4^\ell\epsilon_\ell^\ell c_\ell <\xi^{\ell-1}$ so when $r$ is close enough to $1$ there is an $s$ such that the conditions are satisfied. By continuity of the functions involved, there is a continuous positive function which to $|x_1|\in (0,1)$ associates a value  $s= s(|x_1|)$ such that there exists $r$ satisfying the above set of conditions. Thanks to \eqref{eq:condizioni}, we immediately see that the function $s(\lvert x_1\rvert)$ is bounded from above by $1-|x_1|$.
\end{proof}

\begin{lemma}
\label{lemma.regs}
In the notations of Proposition \ref{prop:ballradii}, 
for any $\varepsilon>0$ there exists a $k=k(\varepsilon)\in\N$ such that if $s(\lvert y_1\rvert/\lVert y\rVert)\leq 2/(k-1)$ then $\lvert y_1\rvert/\lVert y\rVert\geq 1-\varepsilon$.
\end{lemma}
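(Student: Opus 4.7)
The plan is to argue by contraposition and exploit the fact, established in Proposition \ref{prop:ballradii}, that $s:[0,1)\to(0,+\infty)$ is continuous and strictly positive.

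First I would reformulate the statement: it suffices to show that for every $\varepsilon\in(0,1)$ there exists $k=k(\varepsilon)\in\N$ such that whenever $t\in[0,1-\varepsilon]$ one has $s(t)>2/(k-1)$. Indeed, setting $t=|y_1|/\lVert y\rVert$ (which lies in $[0,1)$ under the hypotheses of Proposition \ref{prop:ballradii}), this is precisely the contrapositive of the lemma.

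Next I would invoke compactness. The interval $[0,1-\varepsilon]$ is compact and contained in the domain $[0,1)$ of $s$. Since $s$ is continuous and everywhere strictly positive on $[0,1)$, its restriction to $[0,1-\varepsilon]$ attains a minimum
\[
m(\varepsilon):=\min_{t\in[0,1-\varepsilon]}s(t)>0.
\]
It then suffices to choose any integer $k=k(\varepsilon)$ with $k>1+2/m(\varepsilon)$, so that $2/(k-1)<m(\varepsilon)\le s(t)$ for every $t\in[0,1-\varepsilon]$; this yields the desired contrapositive.

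There is no real obstacle here: all the substance is contained in Proposition \ref{prop:ballradii}, which produces the continuous positive function $s$. The lemma is a soft consequence expressing that the only way $s$ can become arbitrarily small is by approaching the boundary $t=1$ of its domain, which follows immediately from continuity and positivity via the compactness argument above.
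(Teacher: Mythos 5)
Your proof is correct and rests on exactly the same ingredients as the paper's: continuity and strict positivity of $s$ on $[0,1)$ combined with compactness of $[0,1-\varepsilon]$. The paper phrases this as a proof by contradiction with a sequence $y_i$ and a limit point, while you apply the extreme value theorem directly to get a positive minimum $m(\varepsilon)$; these are the same argument in two standard guises, and yours is if anything slightly cleaner.
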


 \begin{proof}
   Suppose this is not the case. Then, there exists an $0<\varepsilon_0<1$ and a sequence of $y_i\in \mathbb{G}$ such that
  $$s(\lvert (y_i)_1\rvert/\lVert y_i\rVert)\leq 1/i\qquad\text{and}\qquad \lvert (y_i)_1\rvert/\lVert y_i\rVert\leq 1-\varepsilon_0.$$
  Without loss of generality we can assume that $y_i\to y$ and by continuity of $s$ and of the norm, we would infer that
  $$s(\lvert y_1\rvert/\lVert y\rVert)=0\qquad\text{and}\qquad \lvert y_1\rvert/\lVert y\rVert\leq 1-\varepsilon_0,$$
  which however is in contradiction with the fact that $s$ is strictly positive on $[0,1)$. 
 \end{proof}

\begin{proposizione}\label{prop:graphproperty}
  Fix an $\epsilon>$ and choose $k=k(\varepsilon )\geq 2$ as in Lemma \ref{lemma.regs} and let $E$ be a Borel set of finite $\mathscr{H}^\alpha$-measure and such that
  \begin{equation}
      \lim_{r\to 0}\frac{\mathscr{H}^\alpha\llcorner E(B(x,r))}{r^\alpha}=1,\qquad \text{for }\mathscr{H}^\alpha\text{-almost every }x\in E.
      \nonumber
  \end{equation}
  For any $j\in\N$ let $E_{j,k}$ be the set of those $x\in E$ such that 
  \begin{equation}
      1-\frac{1}{k}\leq \dfrac{\mathscr{H}^\alpha (B(x,r)\cap E)}{(2r)^\alpha} \qquad {and}\qquad \dfrac{\mathscr{H}^\alpha(G(x,r)\cap E)}{(2r)^\alpha} \le 1+\frac{1}{k}, \qquad \text{for any }0<r<1/j.
  \end{equation}
  Then $\mathscr{H}^\alpha(E\setminus \cup_{j\in\N}E_{j,k})=0$, $\alpha\in\{0,1,\ldots,n_1-1\}$ and there exists a constant $L(k)>0$ such that the set $E_{j,k}$, can be covered $\mathscr{H}^\alpha$-almost all by images of $(1-\varepsilon)^{-1}$-bi-Lipschitz maps from a subset of $V_1$ endowed with the Euclidean metric to $\mathbb{G}$.
\end{proposizione}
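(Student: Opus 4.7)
The plan is to establish the three claims in sequence; the centerpiece is a quantitative contradiction between the lower density of $E$ on a metric ball and its upper density on the associated $G$-set at a well-chosen scale.

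First, I would check that $\mathscr{H}^\alpha$-almost every point of $E$ lies in $\bigcup_j E_{j,k}$. The lower bound $(1-1/k)(2r)^\alpha \leq \mathscr{H}^\alpha(B(x,r)\cap E)$ at small $r$ comes directly from the unit density hypothesis at $\mathscr{H}^\alpha$-a.e.~$x$. For the upper bound, I would apply Proposition~\ref{prop:federer-density} to the closed test sets $S=G(x,r)$: by Proposition~\ref{prop:Gset} each such $S$ has diameter exactly $2r$ and contains $x$, so \eqref{eq:federer-density} bounds $\mathscr{H}^\alpha(G(x,r)\cap E)/(2r)^\alpha$ by $1+1/k$ at all sufficiently small scales.

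The decisive technical step is to show that, for $k$ chosen large enough depending on $\varepsilon$ and $\alpha$, there exists $\delta=\delta(j,k)>0$ such that any $x,y\in E_{j,k}$ with $\lVert x^{-1}*y\rVert<\delta$ satisfy $|(x^{-1}*y)_1|\geq(1-\varepsilon)\lVert x^{-1}*y\rVert$. Arguing by contradiction, put $z=x^{-1}*y$ and suppose $|z_1|/\lVert z\rVert<1-\varepsilon$. The contrapositive of Lemma~\ref{lemma.regs} gives $\tau:=s(|z_1|/\lVert z\rVert)>2/(k-1)$, and Proposition~\ref{prop:ballradii} produces $r\in(|z_1|,\lVert z\rVert)$ with $B(z,\lVert z\rVert\tau)\subset G(0,r)\setminus B(0,r)$; left-translating by $x$ gives $B(y,\lVert z\rVert\tau)\subset G(x,r)\setminus B(x,r)$. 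If $\delta$ is taken so that both $r$ and $\lVert z\rVert\tau$ remain below $1/j$, the inequalities defining $E_{j,k}$ at $x$ and $y$ respectively yield
\[
\mathscr{H}^\alpha(E\cap(G(x,r)\setminus B(x,r)))\leq\tfrac{2}{k}(2r)^\alpha\quad\text{and}\quad\mathscr{H}^\alpha(E\cap B(y,\lVert z\rVert\tau))\geq\bigl(1-\tfrac{1}{k}\bigr)(2\lVert z\rVert\tau)^\alpha.
\]
Combined with $r<\lVert z\rVert$ this forces $\tau^\alpha\leq 2/(k-1)$, which for $k$ large enough (quantitatively depending on $\alpha$) contradicts Lemma~\ref{lemma.regs}. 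The mismatch between the $\alpha$-th power of $\tau$ produced by the density estimate and the linear threshold $2/(k-1)$ of the lemma is the main obstacle, and it is presumably the source of the constant $L(k)$ appearing in the statement.

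To conclude, I would cover $\mathbb{G}$ by a countable family of open sets $\{U_i\}$ of diameter below $\delta(j,k)$. On each slice $F_i:=E_{j,k}\cap U_i$ every pair of points satisfies the cone condition, so the hypothesis~\eqref{parab} of Proposition~\ref{prop:funz} holds with $E$ replaced by $F_i$. That proposition then furnishes a continuous surjection $\varphi_i:\pi_1(F_i)\to F_i$ with $\lVert\varphi_i(w)^{-1}*\varphi_i(z)\rVert\leq(1-\varepsilon)^{-1}|w-z|$. Its inverse is the restriction of $\pi_1$, which is $1$-Lipschitz by the opening proposition of Section~\ref{sec:preliminaries}, so $\varphi_i$ is $(1-\varepsilon)^{-1}$-biLipschitz from $(\pi_1(F_i),|\cdot|)$ into $(\mathbb{G},d)$; by~\eqref{eq:smoothbox} the norm $|\cdot|$ is precisely the Euclidean one on $V_1$, giving the required parametrization, and $\bigcup_i\varphi_i(\pi_1(F_i))$ covers $E_{j,k}$. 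Finally, the integer constraint on $\alpha$ follows a posteriori: the resulting biLipschitz cover of $E$ by subsets of $(V_1,|\cdot|)$ forces $\alpha\leq n_1$, and transporting the density bounds through $\varphi_i^{-1}$ yields a subset of Euclidean $V_1$ with positive, finite $\alpha$-densities almost everywhere, so Preiss's theorem in $\mathbb{R}^{n_1}$ forces $\alpha$ to be an integer.
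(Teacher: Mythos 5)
Your core argument is exactly the paper's: localize $E_{j,k}$ into pieces of diameter below a scale comparable to $1/j$, and for two nearby points $x,y$ in the same piece play the lower density bound on the ball $B(y,\tau\lVert z\rVert)$ (with $z=x^{-1}*y$ and $\tau=s(\lvert z_1\rvert/\lVert z\rVert)$) against the upper bound on the annular region $G(x,r)\setminus B(x,r)$ supplied by Proposition \ref{prop:ballradii}, obtaining $(1-1/k)\tau^\alpha\le 2/k$ and hence the cone condition via Lemma \ref{lemma.regs}, after which Proposition \ref{prop:funz} and the $1$-Lipschitz projection $\pi_1$ give the $(1-\varepsilon)^{-1}$-biLipschitz parametrizations. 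Two of your additions are genuine improvements on the written proof: the verification of $\mathscr{H}^\alpha(E\setminus\bigcup_j E_{j,k})=0$ via Proposition \ref{prop:federer-density} applied to (the closures of) the sets $G(x,r)$, which by Proposition \ref{prop:Gset} have diameter $2r$, is the intended but unwritten argument; and the mismatch you flag between the bound $\tau^\alpha\le 2/(k-1)$ and the linear threshold $2/(k-1)$ in Lemma \ref{lemma.regs} is real --- the paper passes over it silently --- and your repair (take $k$ large depending on $\alpha$ as well as $\varepsilon$, equivalently apply the lemma with threshold $(2/(k-1))^{1/\alpha}$) is correct, since $\alpha$ is fixed by the hypotheses. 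It is not, however, the origin of the constant $L(k)$, which is simply a vestigial constant never used in the statement.

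The one step that does not hold as written is the integrality of $\alpha$. Transporting the density of $E$ through $\varphi_i^{-1}$ gives, at $\mathscr{H}^\alpha$-a.e.\ point of $\pi_1(F_i)\subset V_1$, only the two-sided \emph{bounds} $\Theta^\alpha_*\ge(1-\varepsilon)^\alpha$ and $\Theta^{\alpha,*}\le 1$; it does not give existence of the density, and neither Marstrand's theorem nor Preiss's main theorem applies to mere bounds: self-similar sets of non-integer dimension in $\R^{n}$ have positive finite lower and upper $\alpha$-densities almost everywhere. The statement that does apply is the quantitative one cited in the paper's introduction, \cite[Corollary 5.4(3)]{Preiss1987GeometryDensities}: a density ratio sufficiently close to $1$ (with closeness depending on $\alpha$ and $n_1$) excludes non-integer $\alpha$. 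But the ratio you obtain is of order $(1-\varepsilon)^{-\alpha}$ for the \emph{fixed, arbitrary} $\varepsilon$ of the statement, which need not lie below Preiss's threshold. The gap is repairable in either of two ways: rerun your construction with an auxiliary $\varepsilon'$ small enough depending on $\alpha$ and $n_1$ (legitimate, since the conclusion ``$\alpha\in\N$'' does not involve $\varepsilon$), or follow the paper's own route in the proof of Theorem \ref{thm:main}: intersect the full-measure sets obtained for a sequence $\eta\to 0$ to conclude that $\pi_1(E)$ has \emph{unit} density almost everywhere, and then invoke Mattila's Theorem \ref{thm-euclidean}, which yields both integrality of $\alpha$ and rectifiability at once.
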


\begin{proof}
Fix an index $j\in\N$ and cover $E_{j,k}$ let $B(y_i,r_i)$ be a countably many balls such that $r_i<(10j)^{-1}$ and $E_{j,k}\subseteq \cup_{i\in\N} B(y_i,r_i)$. Fix an $i\in\N$ let $x,y\in E_{j,k}\cap B(y_i,r_i)$ and without loss of generality, suppose that $x=0$. 

Suppose that $|y_1|<||y||$. Thanks to Proposition \ref{prop:ballradii} we know that  $B(y,s(|y_1|/||y||)||y||)\subset G(0,r)\backslash B(0,r)$ for some $r\le ||y||$. However, by definition of $E_{j,k}$, there holds
  \begin{equation}
        \mathscr{H}^\alpha(G(0,r)\backslash B(0,r) \cap E) \leq 2k^{-1}(2r)^\alpha,
        \label{estimate1}
  \end{equation}
  and
  \begin{equation}
      \mathscr{H}^\alpha \Big(B\Big(y,s\Big(\frac{|y_1|}{||y||}\Big)||y||\Big)\cap E\Big)\ge \Big(1-\frac{1}{k}\Big)\Big(2s\Big(\frac{|y_1|}{||y||}\Big)||y||\Big)^\alpha.
      \label{estimate2}
  \end{equation}
    In particular, putting together \eqref{estimate1} and \eqref{estimate2} we infer that
  \begin{equation}\label{eq:s-control}
  \begin{split}
       \Big(1-\frac{1}{k}\Big)s(|y_1|/||y||)^\alpha  \le& \frac{\mathscr{H}^\alpha\llcorner E(B(y,s(|y_1|/||y||)||y||))}{(2\lVert y\rVert)^\alpha}\\
       \leq&\frac{\mathscr{H}^\alpha\llcorner E(G(0,r)\backslash B(0,r))}{(2\lVert y\rVert)^\alpha}\leq \frac{2k^{-1}(2r)^\alpha}{(2\lVert y\rVert)^\alpha}\leq  2 k^{-1}.
  \end{split}
  \end{equation}
Thanks to our choice of $k$, we conclude that for any $i\in\N$ and $x,y\in B(y_i,r_i)\cap E$ we have
    \[
        ||y^{-1}*z|| \le (1-\varepsilon)^{-1} |(y^{-1}*z)_1|.
    \]
Lemma \ref{prop:funz} concludes the proof.
\end{proof}

\begin{proof}[Proof of Theorem \ref{thm:main}]\label{proofmain}
 As $\pi_1$ is 1-Lipschitz, $\pi_1(E)$ has finite Hausdorff measure, in particular, it has upper Hausdorff density at most $1$ almost everywhere.  Fix $\eta>0$, by Proposition \ref{prop:graphproperty}, for every $\eta>0$, $E$ can be covered up to a null set by images of $(1+\eta)$-biLipschitz maps $f_n^{\eta}$ defined on subsets $A_n^{\eta}$ of $V_1$. As $E$ has unit Hausdorff density almost everywhere, so do its subsets $f_n^\eta(A_n^\eta)$ at almost every point (see for instance Proposition 2.2 in \cite{antonelli2020rectifiableA}). Pick such a point $x\in f_n^\eta(A_n^\eta)$ (let us now write $f:= f_n^\eta$ and $A:= A_n^\eta$ for simplicity), there holds
 \[
    \lim_{r\to 0} \dfrac{ \mathscr{H}^\alpha(f(A) \cap B(x,r))}
    {(2r)^\alpha} = 1.
 \]
 As $\pi_1$ is $1$-Lipschitz, there holds
 \[
   \pi_1(f(A)\cap B(x,r)) \subset A \cap B(\pi_1(x),r)
 \]
 and since on the other hand $f$ is $(1+\eta)$-Lipschitz,
 \[
  \mathscr{H}^\alpha(f(A) \cap B(x,r)) \le (1+\eta)^\alpha \mathscr{H}^\alpha(A\cap B(\pi_1(x),r))
 \]
 and in turn
 \begin{align*}
 \dfrac{ \mathscr{H}^\alpha(\pi_1(E) \cap B(x,r))} {(2r)^\alpha} 
 &\ge
 \dfrac{ \mathscr{H}^\alpha(\pi_1(A) \cap B(\pi_1(x),r))} {(2r)^\alpha}\\
 &\ge \dfrac{1}{(1+\eta)^\alpha}
 \dfrac{ \mathscr{H}^\alpha(f(A) \cap B(x,r))} {(2r)^\alpha}.
 \end{align*}
 
 Letting $r$ tend to $0$, we obtain a bound on the lower density of $\pi_1(E)$:
 \[
    \Theta_*^\alpha(\pi_1(E),x) \ge (1+\eta)^{-\alpha}.
 \]

Thus, for every $\eta>0$ there is a subset of $\pi_1(E)$ of full measure with lower Hausdorff density at least $(1+\eta)^{-\alpha}$. Taking a countable intersection for a sequence of $\eta$ going to zero, yields a subset of $\pi_1(E)$ of full measure with unit Hausdorff density. Note that the Hausdorff measure and the density on $V_1$ are computed with respect to a euclidean norm. Therefore the main result of \cite{Mattila1975hausdorff} applies and $\pi_1(E)$ is euclidean rectifiable. Using the biLipschitz maps of Proposition \ref{prop:graphproperty}, one sees that $E$ itself is $\alpha$-rectifiable (in the metric sense of Federer, see \cite[\S 3.2.14]{Federer1996GeometricTheory}). In particular its tangents must be subgroups of $\mathbb{G}$ contained in $V_1$ and $\alpha$ must be the dimension of one of those subgroups.
\end{proof}

\section{The case of metric Lie groups admitting dilations}\label{sec:groupwithdilations}
This section contains a summary of results on metric Lie groups admitting a family of dilations and the construction of a biLipschitz equivalent group on which the conclusion of Theorem~\ref{thm:main} holds.

\begin{definizione}
  If $\mathbb{G}$ is a Lie group with Lie algebra $\mathfrak{g}$. A linear map $A:\mathfrak{g}\to \mathfrak{g}$ is a \textbf{derivation} if it obeys the Leibniz law: $A([x,y])= [A(x),y] + [x, A(y)]$. A derivation determines a one parameter family of dilations $(0,+\infty )\to \mathrm{Aut}(\mathbb{G}$:
  \[
     \lambda \mapsto \delta_\lambda := \lambda^A = e^{(\log \lambda) A}.
   \]
   A left-invariant distance $d$ on $\mathbb{G}$ is $A$-homogeneous if for every $\lambda \in (0,\infty)$: $d(\delta_\lambda x,\delta_\lambda y )=\lambda d(x,y)$.
 \end{definizione}
Note that if $A$ is diagonalizable in $\mathbb{R}$ with integer eigenvalues then $\mathbb{G}$ admits a $A$-homogeneous distance and is in fact a  homogeneous group.

\begin{teorema}[Theorem~1.6 in \cite{LeDonneNiGo2021dilations}]\label{thm:uniquetangent}
  Let $(X,d,\mu)$ be a metric measure space, which admits a unique tangent at $\mu$-almost every point. Then at $\mu$-almost every point, this tangent is a Lie group endowed with a $A$-homogeneous distance for some derivation $A$ of its Lie algebra.
\end{teorema}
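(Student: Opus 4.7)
The plan is to derive the Lie group structure and the derivation simultaneously from a rigidity/self-similarity argument, then invoke the solution of Hilbert's fifth problem. All arguments are pointed-Gromov-Hausdorff.

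First, I would exploit the hypothesis that the tangent $(T,d_T,o)$ at $\mu$-a.e.\ $x\in X$ is unique. Uniqueness means that for every sequence $r_n\to 0$ the pointed rescalings $(X,\tfrac{1}{r_n}d,x)$ converge to the same pointed metric space $(T,d_T,o)$. Composing with an additional rescaling by $\lambda>0$, the same uniqueness forces $(T,\tfrac{1}{\lambda}d_T,o)\cong (T,d_T,o)$. By a standard Arzel\`a--Ascoli argument on pointed isometries (using properness, which itself follows from a doubling argument at such a point, as in Le Donne's earlier work and in Le Donne--Nicolussi Golo), one can select the isometries realising these identifications to depend measurably and in fact continuously on $\lambda\in(0,+\infty)$, yielding a one-parameter family $\{\delta_\lambda\}_{\lambda>0}$ of maps $T\to T$ fixing $o$ and satisfying $d_T(\delta_\lambda p,\delta_\lambda q)=\lambda\, d_T(p,q)$ and $\delta_\lambda\circ\delta_\mu=\delta_{\lambda\mu}$.

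Second, I would build the group structure on $T$. This is the classical part: let $\mathrm{Iso}(T)$ be the isometry group, which is locally compact in the compact-open topology. Translations along approximate geodesics, constructed via limits of $\delta_{1/\lambda}\circ \tau \circ \delta_\lambda$ for suitable $\tau\in\mathrm{Iso}(T)$, yield a transitive subgroup $G\le\mathrm{Iso}(T)$: the existence of arbitrarily contractive self-similarities $\delta_\lambda$ ($\lambda<1$) converts any given isometric displacement at a fixed scale into displacements at every small scale, which after a Montgomery--Zippin type compactness argument assembles into a transitive action. Hence $T=G/H$ where $H$ is the stabiliser of $o$. The presence of the contractive automorphisms $\delta_\lambda$ then forces $H$ to be compact and $T$ to have no small subgroups, so the Gleason--Yamabe theorem (the solution of Hilbert's fifth problem) endows $G$, and hence $T$, with a Lie group structure compatible with the topology.

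Third, I would identify the derivation. The maps $\delta_\lambda$, having become smooth automorphisms of the Lie group $T$ by the previous step, form a continuous one-parameter subgroup of $\mathrm{Aut}(T)$; its generator at the level of the Lie algebra $\mathfrak{t}$ is a linear map $A:\mathfrak{t}\to\mathfrak{t}$. Differentiating the automorphism identity $\delta_\lambda([x,y])=[\delta_\lambda x,\delta_\lambda y]$ at $\lambda=1$ gives the Leibniz rule $A[x,y]=[Ax,y]+[x,Ay]$, so $A$ is a derivation. The $A$-homogeneity $d_T(\delta_\lambda p,\delta_\lambda q)=\lambda d_T(p,q)$ is built into $\delta_\lambda=\lambda^A$ by construction. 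Finally, one promotes this pointwise statement back to \emph{the} tangent of $(X,d,\mu)$ at $\mu$-a.e.\ point, which is what the theorem claims.

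The genuine obstacle is the middle step: upgrading the purely metric self-similar object $(T,d_T,\{\delta_\lambda\})$ to a Lie group. Producing the transitive isometric action without assuming geodesicity (which was used in \cite{LeDonne2011Unique}) is delicate, and the verification that the no-small-subgroups hypothesis of Gleason--Yamabe holds requires carefully leveraging the contractions $\delta_\lambda$, $\lambda\to 0^+$, together with the doubling property at points of unique tangent. The remaining steps are then essentially formal consequences.
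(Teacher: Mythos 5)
The paper does not actually prove this statement: it is imported verbatim as Theorem~1.6 of \cite{LeDonneNiGo2021dilations}, so there is no internal proof to compare yours against. Your outline does capture the broad architecture of the argument in that reference (self-similarity of the tangent from uniqueness, Montgomery--Zippin/Gleason--Yamabe to produce the Lie structure, a derivation generating the dilations), and your first step --- that uniqueness forces $(T,\lambda^{-1}d_T,o)\cong(T,d_T,o)$ for every $\lambda>0$ --- is correct as stated.

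There are, however, two genuine gaps in the middle. First, transitivity. Your construction of a transitive subgroup of $\mathrm{Iso}(T)$ via limits of $\delta_{1/\lambda}\circ\tau\circ\delta_\lambda$ presupposes that $\mathrm{Iso}(T)$ already contains isometries $\tau$ displacing $o$, which is exactly what must be proved; conjugation by dilations only transports displacements between scales \emph{at the basepoint} and says nothing about the structure of $T$ near a different point $p$. The mechanism actually used is Preiss's principle that tangents of tangents are tangents, in its metric-measure form --- this is where the measure $\mu$ and the asymptotic doubling at points of unique tangent genuinely enter --- which shows that the tangent of $(T,p)$ is again $(T,o)$ for \emph{every} $p\in T$; combined with self-similarity at $o$ this yields local, hence global, isometric homogeneity. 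Without that step the Gleason--Yamabe input has nothing to act on. Second, the one-parameter group law. Arzel\`a--Ascoli gives you, for each $\lambda$, \emph{some} isometry realising $(T,\lambda^{-1}d_T,o)\cong(T,d_T,o)$, but nothing forces these choices to satisfy $\delta_\lambda\circ\delta_\mu=\delta_{\lambda\mu}$ or to depend continuously on $\lambda$: the cocycle is not free. The standard repair is Siebert's theorem on contractive automorphisms: once $T$ is a connected locally compact group admitting a single metric contraction, it is a positively graduable Lie group, and the grading supplies the derivation $A$ and the genuine family $\lambda^A$. Your final step (differentiating the automorphism identity to obtain the Leibniz rule) is fine once that is in place.
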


Combining Theorem~1.2, Corollary~2.6 and Lemma~6.1 from \cite{LeDonneNiGo2021dilations}, yields:
\begin{proposizione}\label{prop:realspectrum}
  A group as in the conclusion of Theorem~\ref{thm:uniquetangent} is biLipschitz equivalent to a Lie group with a $A$-homogeneous distance where $A$ has real spectrum $\{t_1, \dots, t_\kappa   \}\subset[\,1,+\infty)$. Letting $V_{t_j}$ be the eigenspaces of $A$: then $A|_{V_1}$ is diagonalizable.
\end{proposizione}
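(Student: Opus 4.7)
The plan is simply to chain the three cited results from \cite{LeDonneNiGo2021dilations} in the right order; no original computation is really needed, but the three steps have to be strung together carefully.

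First, I would feed the metric Lie group with dilations supplied by Theorem~\ref{thm:uniquetangent} into Theorem~1.2 of \cite{LeDonneNiGo2021dilations} to obtain a standard presentation as a connected, simply connected Lie group $\mathbb{G}$ equipped with a left-invariant $A$-homogeneous distance for some derivation $A$ of $\mathfrak{g}$. Along the way one obtains the basic constraint that every eigenvalue of $A$ has real part at least $1$: otherwise, on the corresponding (generalised) eigenspace, the dilation $\delta_\lambda = \lambda^A$ would fail to contract at rate $\lambda$ as $\lambda\to 0$, contradicting the homogeneity identity $d(\delta_\lambda x,\delta_\lambda y)=\lambda d(x,y)$.

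Second, I would apply Corollary~2.6 to remove the imaginary parts of the spectrum at the cost of a biLipschitz change. Concretely, complex eigenvalues of $A$ correspond to rotational subfactors in the one-parameter family $\delta_\lambda$; the corollary supplies a biLipschitz equivalent metric Lie group on which the derivation has purely real spectrum $\{t_1,\dots,t_\kappa\}\subset[\,1,+\infty)$, with the eigenspaces $V_{t_j}$ being precisely the generalised eigenspaces of the original $A$ grouped by real part.

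Third, Lemma~6.1 would then upgrade the statement on the lowest weight: it says exactly that the restriction of $A$ to the eigenspace $V_1$ of eigenvalue $1$ is diagonalizable, even though higher eigenspaces may still carry nontrivial Jordan structure. This is the part that connects the picture back to the homogeneous group setting of Theorem~\ref{thm:main}, since $V_1$ plays the role of the first layer.

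The main obstacle is the second step. Eliminating complex spectrum is not a formal Jordan-decomposition manoeuvre: one must simultaneously adjust the Lie bracket, the derivation, and the quasi-distance so that the underlying metric changes only by a bounded multiplicative factor. Once Corollary~2.6 is accepted as a black box, the other two steps really are just a matter of quoting them in the correct order and checking that the conclusions line up with the statement of the proposition.
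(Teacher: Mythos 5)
Your proposal matches the paper exactly: the paper gives no independent proof of Proposition~\ref{prop:realspectrum}, stating only that it follows by combining Theorem~1.2, Corollary~2.6 and Lemma~6.1 of \cite{LeDonneNiGo2021dilations}, which is precisely the chain of citations you assemble. Your additional commentary on which external result supplies which conclusion (existence of the $A$-homogeneous distance with real parts of eigenvalues at least $1$, the biLipschitz reduction to real spectrum, and the diagonalizability of $A|_{V_1}$) is consistent with how the paper later uses these facts, so nothing is missing relative to the paper's own treatment.
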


The authors of \cite{LeDonneNiGo2021dilations} construct a $A$-homogeneous distance on such groups. In order to prove an analogue of Theorem~\ref{thm:main}, it is more practical to define a smooth-box distance instead. The following statement contains the properties of this distance which suffice to prove an analogue of Theorem~\ref{thm:main}.
\begin{teorema}
  If $(\mathbb{G},A)$ is as in Proposition~\ref{prop:realspectrum} then
  \begin{enumerate}[label = (\roman*)]
    \item \label{item:distanceexistsA} for each $j$, there exists a (euclidean) norm $||\cdot||_{t_j}$ on $V_{t_j}$ and a $A$-homogeneous distance $||\cdot||$ on $\mathbb{G}$ defined by
  \begin{equation}\label{eq:defnormA}
      || (x_{t_1},\dots , x_{t_\kappa})|| := \max \{ \epsilon_j ||x_{t_j}||^{1/t_j}, j=1,\dots, \kappa\},
  \end{equation}
    where $\epsilon_1= 1$ if $V_1\neq \{0\}$ and the other $\epsilon_{t_j}$ can be chosen arbitrarily small.
    \item\label{item:projcontinuousA} The projection $\pi_1:x\mapsto x_1 \in V_1$ is $1$-Lipschitz continuous.
    \item \label{item:defGA} If $B(x,r)$ denotes the balls for $||\cdot||$, then one can define $G(x,r)\supset B(x,r)$ as in Definition~\ref{def:ball}, and $G(x,r)$ has diameter $2r$.
  \end{enumerate}
\end{teorema}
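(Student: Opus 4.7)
The plan is to mirror the constructions of Section~\ref{sec:preliminaries} and Proposition~\ref{prop:Gset}, reading the eigenspace decomposition $\mathfrak{g}=V_{t_1}\oplus\cdots\oplus V_{t_\kappa}$ as a substitute for the grading and the dilations $\delta_\lambda=\lambda^A$ as a substitute for the gradation dilations. Because $A$ is a derivation with real spectrum contained in $[1,+\infty)$, the bracket $[V_{t_i},V_{t_j}]$ lies in the generalized eigenspace of weight $t_i+t_j$; in particular $\mathfrak{g}$ is nilpotent, so the Baker--Campbell--Hausdorff formula applies and one has $(x*y)_{t_\ell}=x_{t_\ell}+y_{t_\ell}+Q_{t_\ell}(x,y)$, where $Q_{t_\ell}$ is a polynomial depending only on the components of $x,y$ of strictly lower weight.

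For~\ref{item:distanceexistsA}, on each $V_{t_j}$ I would choose a euclidean norm $||\cdot||_{t_j}$ so that $||\lambda^A x_{t_j}||_{t_j} = \lambda^{t_j}||x_{t_j}||_{t_j}$ (immediate when $A|_{V_{t_j}}$ is diagonalizable, and handled more carefully otherwise; see below). Positive definiteness and $A$-homogeneity of~\eqref{eq:defnormA} are then immediate, and the triangle inequality is proved layer by layer: for $j=1$ it reduces to the vector-space norm inequality on $V_1$; for $j\ge 2$ one estimates $|Q_{t_j}(x,y)|$ by a sum of terms $||x||^s||y||^t$ with $s+t=t_j$ as in~\eqref{Qestimate}, and chooses $\epsilon_{t_j}$ small enough relative to $\epsilon_{t_1},\dots,\epsilon_{t_{j-1}}$ to absorb the constants, exactly as in the homogeneous-group lemma. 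For~\ref{item:defGA} I set
\[
G(0,1):=\bigl\{x\in\mathbb{G}: |x_1|<1 \text{ and } \epsilon_{t_j}^{t_j}||x_{t_j}||_{t_j}<\xi^{t_j-1} \text{ for } j=2,\dots,\kappa\bigr\}
\]
for $\xi\in(19/10,2)$, and $G(x,r):=x*\delta_r(G(0,1))$. The inclusion $B(x,r)\subset G(x,r)$ is immediate, and $\diam G(0,1)\le 2$ is obtained by estimating $\epsilon_{t_j}^{t_j}|(x^{-1}*y)_{t_j}|$ for $x,y\in G(0,1)$ through~\eqref{Qestimate} and the choice $\xi<2$, exactly as in Proposition~\ref{prop:Gset}.

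The proof of~\ref{item:projcontinuousA} follows the argument given in the Proposition preceding Definition~\ref{def:HausdorffMEasure}. Since $V_1$ is the lowest-weight eigenspace and $A$ is a derivation, the BCH polynomials $Q_{t_\ell}$ have trivial $V_1$ component for every $\ell$, so $(x*y)_1 = x_1+y_1$ and $\pi_1$ is a group morphism. Because $A|_{V_1}$ is diagonalizable with eigenvalue~$1$, $\delta_\lambda$ acts on $V_1$ as multiplication by~$\lambda$, while on the other eigenspaces $\delta_\lambda=\lambda^A$ contracts strictly faster as $\lambda\to 0$ since $t_j>1$. The iteration $\delta_{2^{-n}}((x*x)*\cdots*(x*x))\to(x_1,0,\dots,0)$ then converges into the first layer, giving $|x_1|\le||x||$; left-invariance and the morphism property of $\pi_1$ yield the $1$-Lipschitz bound.

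The main obstacle is the possible non-diagonalisability of $A$ on the eigenspaces $V_{t_j}$, $j\ge 2$: in that case $\lambda^A|_{V_{t_j}} = \lambda^{t_j} e^{(\log\lambda)N_j}$ with $N_j$ nilpotent and nonzero, and no euclidean norm can make this semigroup act as the pure dilation $x\mapsto\lambda^{t_j}x$, so the $A$-homogeneity of~\eqref{eq:defnormA} fails on the nose. One can try to bypass this by a preliminary biLipschitz change of group, as in the constructions of~\cite{LeDonneNiGo2021dilations}, bringing $A$ into diagonal form at the cost of distorting distances by a bounded factor, or by working with quasi-norms on each $V_{t_j}$ adapted to the semigroup $\lambda^{A}|_{V_{t_j}}$ and checking that the log-polynomial corrections can be absorbed into the small parameters $\epsilon_{t_j}$, using the fact that $\lambda^{t_j-1}$ dominates any power of $|\log\lambda|$ as $\lambda\to 0^+$.
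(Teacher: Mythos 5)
Your handling of items (ii) and (iii), and of item (i) in the case where $A|_{V_{t_j}}$ is diagonalizable for every $j$, matches the paper's own argument (same halving iteration for the projection bound, same BCH layer-by-layer estimates for the box set). The genuine gap is exactly the point you yourself flag as ``the main obstacle,'' and neither of your two proposed exits closes it. The first exit fails outright: no biLipschitz change of group can ``bring $A$ into diagonal form.'' Linear conjugation preserves the Jordan type of $A$, and replacing $A$ by its semisimple part $A_s$ (which is again a derivation) changes the metric structure in a non-biLipschitz way. Concretely, on the abelian group $\R^2$ with $A=\bigl(\begin{smallmatrix}2&1\\0&2\end{smallmatrix}\bigr)$, the point $p_\lambda:=\lambda^A(0,1)=(\lambda^2\log\lambda,\lambda^2)$ satisfies $d(0,p_\lambda)=\lambda\, d(0,(0,1))$ for \emph{every} $A$-homogeneous distance $d$, whereas every $A_s$-homogeneous distance is comparable to $|\cdot|^{1/2}$ and hence gives $\simeq \lambda\lvert\log\lambda\rvert^{1/2}$ at $p_\lambda$; the ratio diverges as $\lambda\to 0$, so no bounded-distortion reduction to the diagonalizable case exists. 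This is precisely why Proposition~\ref{prop:realspectrum} grants only real spectrum and diagonalizability of $A|_{V_1}$, not of all of $A$. Moreover, the theorem needs homogeneity with respect to the \emph{given} dilations $\lambda^A$ (otherwise $B(x,r)=x*\delta_rB(0,1)$, $G(x,r)=x*\delta_rG(0,1)$ and the machinery of Section~\ref{sec:proof} are unavailable), so a distance homogeneous for a different derivation would not prove the statement anyway.

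Your second exit is the right direction but is missing the key mechanism. You are correct that, with non-diagonalizable $A$, no choice of euclidean layer norms makes \eqref{eq:defnormA} homogeneous on the nose; the paper resolves this by decoupling homogeneity from the layer norms. One chooses a Jordan basis, rescaled so that the operator norm of $\lambda^{A}|_{V_{t_j}}$ is at most $\lambda^{t_j-\theta_j}$ for $\lambda\in(0,1\,]$ with $\theta_j$ arbitrarily small (and exactly $\lambda$ on $V_1$, using diagonalizability there), sets $B:=\{x:\ \max_j \epsilon_j\lVert x_{t_j}\rVert^{1/t_j}\le 1\}$, and defines the norm as the gauge $\lVert x\rVert:=\inf\{\lambda>0:\ \delta_{1/\lambda}x\in B\}$. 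The operator bounds give $\delta_\lambda B\subset B$ for $\lambda\le 1$, so the gauge is well defined, is $A$-homogeneous \emph{by construction}, and has $B$ as its unit ball; the triangle inequality then becomes the $A$-convexity of $B$, namely $\delta_\lambda x*\delta_{1-\lambda}y\in B$ for $x,y\in B$ and $\lambda\in(0,1)$, which is verified layer by layer (for $1<t_j<2$, $t_j=2$, $t_j>2$) following Lemmas~6.5, 6.7 and Proposition~6.8 of \cite{LeDonneNiGo2021dilations}. Your remark that $\lambda^{t_j-1}$ dominates any power of $\lvert\log\lambda\rvert$ is indeed what makes the slack $\theta_j$ harmless in those verifications, but without the gauge step the max of ``adapted quasi-norms'' you propose is simply not an $A$-homogeneous norm, no matter how small the $\epsilon$'s are, so item (i) --- and with it (ii) and (iii) as stated --- remains unproved.
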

\begin{proof}
  The proof of \ref{item:distanceexistsA} is a modification of that of Theorem~1.2 in \cite{LeDonneNiGo2021dilations}. One first choses a basis of $\mathfrak{g}$ in which $A$ is in Jordan form. Considering the spaces $V_{t_j}$ one by one and rescaling their bases, yields a euclidean norm on each $V_{t_j}$ such that for $\lambda\in (0,1\,]$ the operator norm of $\lambda^{A}|_{V_{t_j=}}$ is bounded from above by $\lambda^{t_j-\theta_j}$ for an arbitrarily small $\theta_j$. Using the fact that $A|_{V_1}$ is diagonalizable, one proves that the operator norm of $\lambda^{A}|_{V_1}$ is $\lambda$.
  Writing $||\cdot||$ as in \eqref{eq:defnormA}, it suffices to prove that the ''unit ball'' $B:=\{x,||x||\le 1\}$ is $A$-convex, or that for $\lambda\in (0,1)$ and $x,y$ in $B$, there holds
  \[
     || (\lambda^A x) \cdot ((1-\lambda)^Ay) || \le 1.
  \]
  This is done iteratively over the subspaces $V_{t_j}$. For $V_1$ it is clear. For $V_{t_j}$ with $1<t_j<2$, one proceeds as in Lemma~6.5 of \cite{LeDonneNiGo2021dilations}. For $t_j=2$, the reasonning of Lemma~6.7 in \cite{LeDonneNiGo2021dilations} works. For $t_j>2$, one needs to use the Baker-Campbell-Hausdorff formula as in the proof that the smooth-box norm on a Carnot Group is a metric (see the Appendix of \cite{step2}) but this is slightly more subtle (see Proposition~6.8 in \cite{LeDonneNiGo2021dilations}).

  \medskip

  In order to prove \ref{item:projcontinuousA}, pick $x= (x_1,x_{t_2},\dots, x_{t_\kappa})\in \mathbb{G}$. It suffices to prove on the one hand that
  \[
       ||  ((1/2)^A x) \cdot ((1/2)^A y) ||\le 1
  \]
  which is clear  by the Baker-Campbell-Hausdorff formula and the properties of the norms $||\cdot ||_{t_j}$,
  and on the other hand that iterating the procedure yields a sequence which tends to $(x_1,0,\dots)$. The fact that the components of order $t_j>1$ tend to $0$ is a consequence of $2 (1/2)^{t_j} <1$. And the fact taht the first component remains constant is clear because $A|_{V_1}$ is diagonalizable over $\R$ with eigenvalues $1$ (in fact, it is the identity). The proof of \ref{item:defGA} is straightforward using the properties of $||\cdot||$ and the technique of the previous section.
\end{proof}

\section{Discussion of the results in the first Heisenberg group}\label{sec:h1}

The point of this section is to compare 
the results of this work with the existing literature in the particular case of 
the first Heisenberg group. Throughout this section we will always endow $\HH^1$ with the Koranyi metric, which is the following homogeneous left invariant distance
\begin{equation}
    d_\mathscr{K}(x,y):=(\lvert x_1-y_1\rvert^4+\lvert x_2-y_2-2\langle x_1,Jy_1\rangle\rvert^2)^{1/4},
    \label{koranyidistance}
\end{equation}
where $J$ is the standard symplectic $2\times 2$ matrix. The fact that this defines a distance was first proved in \cite{Cygan1981Subadditivity}.

\medskip

Before delving into the discussion, we recall that whenever we endow $\mathbb{H}^1$ with a left invariant and homogeneous metric $d$, the metric space $(\mathbb{H}^1,d)$ is a $2,3,4$-purely unrectifiable metric space of Hausdorff dimension $4$, see \cite{Kirchheim2002UniformilySpaces}.
Another property of the Koranyi metric is that its ball, like that of the smooth-box metric, is not isodiametric. More precisely, in the following statement we construct a set which is a better candidate for the isodiametric problem.

\begin{proposizione}\label{GforKoranyi}
  Let $G(0,1)$ be the subset of those $x=(x_1,x_2)\in\HH^1$ such that $$\lvert x_1\rvert\leq 1\qquad \text{and}\qquad \lvert x_2\rvert^2 \leq 
  2*(1-\lvert x_1\rvert^4).$$
  Then $B(0,1)\subsetneq G(0,1)$ and $\diam(G(0,1))\leq 2$.
\end{proposizione}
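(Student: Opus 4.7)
The plan is to separately establish the strict inclusion and the diameter bound. Rewriting $G(0,1) = \{(x_1,x_2): |x_1|^4 + |x_2|^2/2 \leq 1\}$, any $x\in B(0,1)$ satisfies $|x_2|^2\leq 1 - |x_1|^4 \leq 2(1-|x_1|^4)$, so $B(0,1)\subset G(0,1)$; strictness is witnessed by $(0,\sqrt 2)\in G(0,1)\setminus B(0,1)$.

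For the diameter, I would first exploit rotational symmetry: rotations $R_\theta$ acting as $(x_1, x_2)\mapsto (R_\theta x_1, x_2)$ are isometric automorphisms of $\mathbb H^1$ preserving $G(0,1)$ (this uses $R_\theta^\top J R_\theta = J$). Combined with left-invariance, this reduces the problem to pairs $x = ((u,0), x_2)$, $y = (v(\cos\phi, \sin\phi), y_2)$, with $u,v \in [0,1]$. A direct computation gives $\langle x_1, Jy_1\rangle = -uv\sin\phi$ and
\[
d_\mathscr{K}(x,y)^4 = (u^2+v^2 - 2uv\cos\phi)^2 + (y_2 - x_2 + 2uv\sin\phi)^2.
\]
Since $|x_2| \leq \sqrt{2(1-u^4)}$ and $|y_2|\leq \sqrt{2(1-v^4)}$, and assuming WLOG $\sin\phi \geq 0$, the second term is at most $(r+2uv\sin\phi)^2$ with $r := \sqrt{2(1-u^4)}+\sqrt{2(1-v^4)}$. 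The resulting upper bound for $d_\mathscr K(x,y)^4$ has the form $|(u^2+v^2, r) + 2uv\,(-\cos\phi,\sin\phi)|^2$; since the perturbation vector traces an upper semicircle of radius $2uv$ and the first vector lies in the (closed) upper half-plane, the supremum over $\phi$ is attained when the two align, yielding
\[
\max_\phi F(\phi) = \bigl(\sqrt{(u^2+v^2)^2 + r^2} + 2uv\bigr)^2.
\]

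It then suffices to prove the scalar inequality $\sqrt{(u^2+v^2)^2+r^2} + 2uv \leq 4$ for $u,v\in[0,1]$. Squaring, using $(u^2+v^2)^2 - 4u^2v^2 = (u^2-v^2)^2$ together with the expansion $r^2 = 4 - 2u^4 - 2v^4 + 4\sqrt{(1-u^4)(1-v^4)}$, this becomes
\[
4\sqrt{(1-u^4)(1-v^4)} \leq (u^2+v^2)^2 - 16uv + 12.
\]
This is the main algebraic obstacle. I would attack it with AM--GM on the left, $\sqrt{(1-u^4)(1-v^4)} \leq 1 - (u^4+v^4)/2$ (sharp when $u=v$), and then change variables to $a = u^2+v^2$, $b = uv$, with $0\leq 2b\leq a\leq 2$ and $u^4+v^4 = a^2 - 2b^2$. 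The inequality reduces to the polynomial condition
\[
3a^2 - 4b^2 - 16b + 8 \geq 0.
\]
Viewed as a function of $b$, the left side is a downward-opening parabola with derivative $-8b-16$ strictly negative on $[0,a/2]$, so its minimum on this interval is achieved at $b = a/2$ and equals $2(a-2)^2\geq 0$. Equality throughout forces $u=v=1$ and $\phi = \pi$, matching the fact that distance $2$ is realized between the antipodal points $((1,0),0)$ and $((-1,0),0)$ on the unit circle of $V_1$.
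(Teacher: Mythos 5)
Your proof is correct. Note that the paper itself does not give an argument for Proposition~\ref{GforKoranyi} --- it states only that ``the proof is a lengthy computation, which we omit'' --- so there is nothing to compare against; your write-up actually supplies the omitted computation. I checked the steps: the identification $G(0,1)=\{|x_1|^4+|x_2|^2/2\le 1\}$ and the witness $(0,\sqrt2)$ for strictness are fine; the rotational reduction is legitimate since $R_\theta^\top JR_\theta=J$ for planar rotations, so the maps $(x_1,x_2)\mapsto(R_\theta x_1,x_2)$ are isometries preserving $G(0,1)$ (left-invariance plays no role here, but that is harmless). The sign discrepancy between $x_2-y_2$ and $y_2-x_2$ in the vertical term is immaterial because you immediately pass to the bound $|x_2|+|y_2|+2uv|\sin\phi|$. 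The semicircle maximization is valid because the fixed vector $(u^2+v^2,r)$ lies in the closed first quadrant, so the global maximizer $e=w/|w|$ of $|w+2uv\,e|$ over the full circle already lies on the admissible semicircle, giving $(\sqrt{(u^2+v^2)^2+r^2}+2uv)^2$. The squaring step is licit since $2uv\le 2<4$, the algebraic simplification to $4\sqrt{(1-u^4)(1-v^4)}\le(u^2+v^2)^2-16uv+12$ is correct, and the AM--GM reduction to $3a^2-4b^2-16b+8\ge 2(a-2)^2\ge 0$ on the region $0\le 2b\le a\le 2$ (which contains all achievable pairs $(a,b)=(u^2+v^2,uv)$) closes the argument; the equality analysis at $u=v=1$, $\phi=\pi$ confirms the bound is sharp, consistent with $\diam G(0,1)=2$.
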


The proof is a lengthy computation, which we omit. There is also an analogue of Proposition~\ref{prop:ballradii} for this choice of $B$ and $G$, which allows us to prove:
\begin{teorema}\label{thm:main2}
 Suppose $E$ is a Borel subset of $\mathbb{H}^1$ for which there exists an $\alpha>0$ such that $\mathscr{H}^\alpha(E) <+\infty$ and 
 \begin{equation}
     \lim_{r\to 0}\frac{\mathscr{H}^\alpha(B(x,r))}{r^\alpha}=1,\qquad\text{for $\mathscr{H}^\alpha$-almost all $x\in E$},
 \end{equation}
 and where $B(x,r)$ is the ball relative to the Koranyi distance, see \eqref{koranyidistance}. Then $\alpha=1$ and $E$ can be covered with countably many Lipschitz images of compact subsets of $\R$ up to an $\mathscr{H}^1$-null set. 
\end{teorema}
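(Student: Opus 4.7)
The plan is to mimic the proof of Theorem \ref{thm:main} essentially verbatim, replacing the smooth-box norm by the Koranyi norm and the smooth-box set $G(0,1)$ of Proposition \ref{prop:Gset} by the one given in Proposition \ref{GforKoranyi}. The key structural inputs are: (a) $B(0,1)\subsetneq G(0,1)$ with $\diam G(0,1)\le 2$, so that any translate $x*\delta_r(G(0,1))$ is a set of diameter $\le 2r$ containing $B(x,r)$; and (b) the analogue of Proposition \ref{prop:ballradii} stated in the excerpt, which produces a continuous positive function $s:[0,1)\to(0,+\infty)$ such that whenever $|x_1|<d_\mathscr{K}(0,x)$ there exists a radius $|x_1|<r<d_\mathscr{K}(0,x)$ with $B(x,d_\mathscr{K}(0,x)\,s(|x_1|/d_\mathscr{K}(0,x)))\subset G(0,r)\setminus B(0,r)$, and $s$ bounded from above by $1-|x_1|/d_\mathscr{K}(0,x)$.

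With these ingredients in hand, I would first apply Proposition \ref{prop:federer-density} to deduce that for $\mathscr{H}^\alpha$-a.e. $x\in E$ and every $S$ containing $x$ of sufficiently small diameter, $\mathscr{H}^\alpha(E\cap S)\le(1+o(1))(\diam S)^\alpha$; combined with the unit density hypothesis, this gives the two-sided estimates
\begin{equation*}
\Bigl(1-\tfrac{1}{k}\Bigr)(2r)^\alpha \le \mathscr{H}^\alpha(E\cap B(x,r))\le \mathscr{H}^\alpha(E\cap G(x,r))\le \Bigl(1+\tfrac{1}{k}\Bigr)(2r)^\alpha
\end{equation*}
for $0<r<1/j$, on a set $E_{j,k}$ of points with $\mathscr{H}^\alpha(E\setminus\bigcup_j E_{j,k})=0$, exactly as in Proposition \ref{prop:graphproperty}. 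Then I would fix a small ball in $E_{j,k}$ and two points $x,y$ in it; suppose $|y_1-x_1|<d_\mathscr{K}(x,y)$. The Koranyi analogue of Proposition \ref{prop:ballradii} produces a Koranyi ball around $y$ sitting inside $G(x,r)\setminus B(x,r)$ for some $r\le d_\mathscr{K}(x,y)$, whose $\mathscr{H}^\alpha\llcorner E$-mass is bounded below by $(1-1/k)(2s\cdot d_\mathscr{K}(x,y))^\alpha$ and above by $2k^{-1}(2r)^\alpha$. Dividing and invoking Lemma \ref{lemma.regs} (which transfers verbatim, being only a compactness statement about the function $s$), for $k=k(\varepsilon)$ large enough this forces $d_\mathscr{K}(x,y)\le(1-\varepsilon)^{-1}|(x^{-1}*y)_1|$ on $E_{j,k}$.

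Once this cone estimate is established, Proposition \ref{prop:funz} gives, for each $\eta>0$, a covering of $E$ up to an $\mathscr{H}^\alpha$-null set by images of $(1+\eta)$-biLipschitz maps $f_n^\eta:A_n^\eta\subset V_1\to\mathbb{H}^1$, where $V_1\simeq\R^2$ is equipped with the Euclidean norm. Since $\pi_1$ is $1$-Lipschitz, the argument of the proof of Theorem \ref{thm:main} yields $\Theta_*^\alpha(\pi_1(E),\pi_1(x))\ge(1+\eta)^{-\alpha}$ at a.e.\ $x$, so letting $\eta\to 0$ along a countable sequence we obtain that $\pi_1(E)$ has unit Hausdorff density $\mathscr{H}^\alpha$-a.e.\ in $(\R^2,|\cdot|)$. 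Mattila's theorem (Theorem \ref{thm-euclidean}) then forces $\alpha\in\N$ and makes $\pi_1(E)$ Euclidean $\alpha$-rectifiable. Lifting back via the biLipschitz maps, $E$ itself is metric $\alpha$-rectifiable, and its approximate tangents at a.e.\ point must be $\alpha$-dimensional homogeneous subgroups of $\mathbb{H}^1$ contained in $V_1$. As $V_1$ is a $2$-dimensional abelian subspace of $\mathbb{H}^1$ but is not itself a subgroup of $\mathbb{H}^1$ (the bracket maps $V_1\times V_1$ onto $V_2$), the only such subgroups of positive dimension are $1$-dimensional lines in $V_1$, forcing $\alpha=1$.

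The main obstacle, and the only point where the Heisenberg argument genuinely differs from the general case, is verifying the Koranyi analogue of Proposition \ref{prop:ballradii}: one must produce the function $s$ explicitly from the quartic defining equations of $G(0,1)$ and $B(0,1)$ in Proposition \ref{GforKoranyi}, checking by a direct (though tedious) Koranyi computation that a small Koranyi ball around any boundary point of $B(0,1)$ with $|x_1|<1$ fits strictly between some $B(0,r)$ and $G(0,r)$. Once this geometric fact is in place, the rest of the argument is entirely parallel to Section \ref{sec:proof}.
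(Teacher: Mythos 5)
Your proposal is correct and follows essentially the same route as the paper, which itself only sketches this proof: Proposition \ref{GforKoranyi} plus a Koranyi analogue of Proposition \ref{prop:ballradii}, fed into the machinery of Section \ref{sec:proof} (Lemma \ref{lemma.regs}, Propositions \ref{prop:funz} and \ref{prop:graphproperty}, then Mattila's theorem on $\pi_1(E)\subset V_1$ and the observation that the only positive-dimensional horizontal subgroups of $\mathbb{H}^1$ are lines). You correctly identify the one genuinely new ingredient --- the explicit Koranyi computation behind the analogue of Proposition \ref{prop:ballradii} --- which the paper also leaves unwritten.
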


This is the analogue of  Mattila's beautiful result \cite{Mattila1975hausdorff} in which the author characterised rectifiable sets as those for which the density of the Hausdorff measure is equal to $1$ almost everywhere. Putting together \cite{kircharea} and Theorem \ref{thm:main2} we obtain, as in Theorem \ref{thm:main}, that rectifiable sets are characterised by having unit density for the Hausdorff measure almost everywhere. 
Furthermore, thanks to the work of G. Antonelli, V. Chousionis, V. Magnani, J. Tyson and the second named author the \emph{density problem}, see \cite{MarstrandMattila20} for a proper definition, has a complete and satisfactory solution in $\HH^1$ endowed with the Koranyi metric

\begin{teorema}[{\cite{antonelli2020rectifiableB,ChousionisONGROUP,Chousionis2015MarstrandsGroup,Merlo1,MarstrandMattila20}}]\label{preiss}
 Let $\phi$ be a Radon measure on $(\mathbb{H}^1,d)$ such that the limit 
 $$\lim_{r\to 0}r^{-\alpha}\phi(B(x,r))$$
 exists and is positive and finite $\phi$-almost everywhere, then $\phi\ll\mathscr{H}^\alpha$,  $\alpha\in\{0,1,\ldots,4\}$ and $\phi$ is $\mathscr{P}_\alpha$-rectifiable, or in other words
 \begin{itemize}
     \item[(i)] if $\alpha=0$ then $\phi$ is a sum of countably many multiples of Dirac masses;
     \item[(ii)] if $\alpha=1$, then $\mathbb{H}^1$ can be covered $\phi$-almost all by countably many Lipschitz images of $\R$;
     \item[(iii)] if $\alpha=2$, then $\phi$-almost everywhere the blowups of the measure $\phi$ coincide with the vertical line $\mathscr{V}:=\mathrm{span}\{e_3\}$, for a precise statement we refer to the introduction of \cite{antonelli2020rectifiableB};
     \item[(iv)] if $\alpha=3$, then $\mathbb{H}^1$ can be covered up to a $\phi$-null set 
     by countably many $C^1_{\mathbb{H}^1}$ 
     surfaces (see Definition~\ref{def:C1h}). Recall that $C^1_{\mathbb{H}^1}$-surfaces, introduced in \cite{Serapioni2001RectifiabilityGroup}, can be fractals from the Euclidean perspective and it is not even known at the moment of writing this work, if they are Lipschitz images of compact subsets of the parabolic plane, which is the model for homogeneous $1$-codimensional subgroups of $\mathbb{H}^1$.
     \end{itemize}
\end{teorema}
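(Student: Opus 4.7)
The plan is to decompose Theorem \ref{preiss} into three layers: (a) absolute continuity with respect to $\mathscr{H}^\alpha$, (b) a Marstrand/Preiss-type classification that forces $\alpha$ to be an integer in $\{0,\dots,4\}$ and pins down the blowup structure of $\phi$, and (c) a case-by-case appeal to existing rectifiability theorems at each admissible dimension.

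For step (a), the hypothesis gives $0<\Theta^{\alpha,*}(\phi,x)<\infty$ at $\phi$-a.e.\ $x$, and a standard Vitali covering argument on $(\mathbb{H}^1,d_\mathscr{K})$ then yields $\phi\ll\mathscr{H}^\alpha$ together with $\sigma$-finiteness of $\mathscr{H}^\alpha\llcorner\mathrm{supp}(\phi)$. Step (b) is the heart of the matter. The existence, not merely bounds, of the limit $r^{-\alpha}\phi(B(x,r))\to\Theta^\alpha(\phi,x)$ implies, through the standard blowup machinery (adapted from Preiss to the homogeneous setting), that every $\nu\in\mathrm{Tan}_\alpha(\phi,x)$ is $\alpha$-uniform at $\phi$-a.e.\ $x$. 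I would then invoke the classification of $\alpha$-uniform measures on $(\mathbb{H}^1,d_\mathscr{K})$ obtained in \cite{ChousionisONGROUP,Chousionis2015MarstrandsGroup,Merlo1}: any such $\nu$ is a positive multiple of $\mathscr{H}^\alpha$ restricted to a homogeneous subgroup of $\mathbb{H}^1$, and the admissible values of $\alpha$ are exactly the Hausdorff dimensions of such subgroups, namely $\{0,1,2,3,4\}$ (the trivial subgroup, horizontal lines, the center $\mathscr{V}$, vertical planes, and $\mathbb{H}^1$ itself). This produces integrality of $\alpha$ and $\mathscr{P}_\alpha$-rectifiability of $\phi$ in the sense of Definition \ref{prectifiable} in one stroke.

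Step (c) then splits into dimensions. When $\alpha=0$, a positive $0$-density at $x$ forces $\phi(\{x\})>0$, and $\sigma$-finiteness produces the countable sum of Dirac masses of (i). When $\alpha=2$, the only $2$-dimensional homogeneous subgroup of $\mathbb{H}^1$ is $\mathscr{V}$, so (iii) is immediate from $\mathscr{P}_2$-rectifiability. When $\alpha=1$, tangents are Haar measures on horizontal lines, and the Lipschitz-image covering in (ii) follows from the $\mathscr{P}_1$-rectifiability results of \cite{antonelli2020rectifiableA,antonelli2020rectifiableB}, since $1$-dimensional horizontal subgroups are isometric to $\R$. When $\alpha=3$, one invokes the main structure result of \cite{Merlo1} together with \cite{antonelli2020rectifiableB}, which upgrades $\mathscr{P}_3$-rectifiability in $\mathbb{H}^1$ to a covering by countably many $C^1_{\mathbb{H}^1}$-hypersurfaces in the sense of \cite{Serapioni2001RectifiabilityGroup}. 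The case $\alpha=4$ is trivial: the tangent is $\mathbb{H}^1$ itself and $\phi$ is locally a multiple of Haar measure.

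The main obstacle is step (b), namely the classification of $\alpha$-uniform measures in $\mathbb{H}^1$. This is the Heisenberg analogue of Preiss's theorem and is distinctly harder than the Euclidean case: the non-commutative group law contributes through the Baker-Campbell-Hausdorff formula, the dilations are anisotropic, and Hausdorff and topological dimensions disagree, so the moment-expansion argument which in $\R^n$ rules out non-integer dimensions must be redone dimension by dimension; this is the content of \cite{ChousionisONGROUP,Chousionis2015MarstrandsGroup,Merlo1}. A secondary difficulty sits at $\alpha=3$, where going from $\mathscr{P}_3$-rectifiability to $C^1_{\mathbb{H}^1}$-regular coverings requires the substantial analysis in \cite{Merlo1,antonelli2020rectifiableB}. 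Once these deep inputs are available, the remaining arguments are bookkeeping.
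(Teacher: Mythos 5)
You should first be aware that the paper contains no proof of Theorem \ref{preiss} at all: it is a survey statement imported from the cited literature, and no argument for it appears anywhere in the text. The only meaningful comparison is therefore between your outline and the structure of the proofs in the references \cite{antonelli2020rectifiableB,ChousionisONGROUP,Chousionis2015MarstrandsGroup,Merlo1,MarstrandMattila20}. At that level, your three-layer decomposition has the right shape: step (a) (finite positive upper density implies $\phi\ll\mathscr{H}^\alpha$ via a covering argument) is standard and correct; the integrality of $\alpha$ is indeed the Marstrand-type theorem of \cite{Chousionis2015MarstrandsGroup}; and your step (c) attributions are essentially right, with the upgrades to Lipschitz images of $\R$ for $\alpha=1$ and to $C^1_{\mathbb{H}^1}$-surfaces for $\alpha=3$ coming from the structure theorems of \cite{antonelli2020rectifiableB} together with \cite{Merlo1}.

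The genuine problem is at the heart of your step (b). You assert that \cite{ChousionisONGROUP,Chousionis2015MarstrandsGroup,Merlo1} provide a \emph{complete classification} of $\alpha$-uniform measures on $(\mathbb{H}^1,d_{\mathscr{K}})$ --- every such measure being a multiple of $\mathscr{H}^\alpha$ on a homogeneous subgroup --- and that $\mathscr{P}_\alpha$-rectifiability then follows ``in one stroke''. No such classification is available, and it is not what those papers prove; this mirrors the Euclidean situation, where Preiss's theorem does not classify $m$-uniform measures in $\R^n$ (that problem is still open in general, and the Kowalski--Preiss cone shows that non-flat uniform measures genuinely exist). What the cited works establish is weaker structural information on uniform measures --- complete classification only in the lowest dimensions, and for the $1$-codimensional case containment of supports in quadrics together with flatness of blow-ups \cite{Merlo1} --- from which one deduces only that tangent measures are flat at $\phi$-almost every point. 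The passage from almost-everywhere flatness of tangents to $\mathscr{P}_\alpha$-rectifiability, including the requirement in Definition \ref{prectifiable}(ii) that a \emph{single} subgroup $\mathbb{V}(x)$ serve all tangent measures at $x$, is precisely the content of the Marstrand--Mattila rectifiability criterion of \cite{MarstrandMattila20}: this is one of the five works cited in the statement, and your outline never invokes it, so as written your step (b) does not close. A minor further imprecision: for $\alpha=4$, $\phi$ is absolutely continuous with almost-everywhere positive finite density, but it need not be locally a multiple of Haar measure; what is true is that its tangents are multiples of the Haar measure, which is exactly what $\mathscr{P}_4$-rectifiability requires.
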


This result is the analogue of the celebrated Preiss's rectifiability theorem, see \cite{Preiss1987GeometryDensities}, in the first Heisenberg group $\mathbb{H}^1$. Theorems \ref{preiss} and \ref{thm:main2} show that there are both great similarities and differences between the general homogeneous groups and the Euclidean space. On the one hand, the Density Problem has an extremely natural solution in the form of Theorem \ref{preiss}. On the other, Theorem \ref{thm:main2} tells us that the very natural $C^1_{\mathbb{H}^1}$-regular surfaces \emph{cannot} have unitary density for the Hausdorff measure. This phenomenon however should be expected in view of Theorem \ref{thm:measuresrectifiable}, which tells us that in order to have unitary density for the Hausdorff measure at a point for a $C^1_{\mathbb{H}^1}$-surface we need that the intersection of the ball with the tangent plane at that point is an isodiametric set. Since tangents to $1$-codimensional surfaces are parabolic planes, the questions of Section \ref{sec:isodiametric} were motivated by trying to finding a metric for which there exists a $C^1_{\mathbb{H}^1}$-regular surface with unitary Hausdorff density.


\section*{Acknowledgements}
A.J.~wishes to thank Enrico Le Donne for interesting discussions. This work was completed at the HIM in Bonn during the 2022 Trimester Interactions between Geometric measure theory, Singular integrals, and PDE. We wish to thank the organizers of this event and the HIM staff. Both A.J.~and A.M.~were supported by the Simons Foundation grant 601941, GD. A.M was partially supported by the Swiss National Science Foundation
(grant 200021-204501 `\emph{Regularity of sub-Riemannian geodesics and
applications}')
and by the European Research Council (ERC Starting Grant 713998 GeoMeG
`\emph{Geometry of Metric Groups}').

%

\printbibliography
\end{document}